\documentclass[12pt, reqno]{amsart}
\usepackage{amsmath, amsthm, amscd, amsfonts, amssymb, graphicx, color}
\usepackage[hypertex,bookmarksnumbered, colorlinks, plainpages]{hyperref}
\hypersetup{colorlinks=true,linkcolor=red, anchorcolor=green, citecolor=cyan, urlcolor=red, filecolor=magenta, pdftoolbar=true}
\textheight 22.5truecm \textwidth 14.5truecm
\setlength{\oddsidemargin}{0.35in}\setlength{\evensidemargin}{0.35in}
\setlength{\topmargin}{-.5cm}

\newtheorem{theorem}{Theorem}[section]
\newtheorem{prop}[theorem]{Proposition}
\newtheorem{lemma}[theorem]{Lemma}
\newtheorem{cor}[theorem]{Corollary}
\theoremstyle{definition}
\newtheorem{defn}[theorem]{Definition}
\newtheorem{eg}[theorem]{Example}
\newtheorem{note}[theorem]{Note}

\theoremstyle{remark}
\newtheorem{rem}[theorem]{Remark}

\numberwithin{equation}{section}

\begin{document}

\title{OPERATORS THAT ATTAIN REDUCED MINIMUM}
\author[S. H. Kulkarni, \MakeLowercase{and} G. Ramesh]{S. H. Kulkarni,$^1$  \MakeLowercase{and} G. Ramesh$^2$$^{*}$}

\address{$^{1}$ Department of Mathematics, Indian Institute of Technology - Madras, Chennai 600 036.}
\email{shk@iitm.ac.in}

\address{$^{2}$ Department of Mathematics, Indian Institute of Technology - Hyderabad, Kandi, Sangareddy, Telangana, India 502 285.}
\email{rameshg@iith.ac.in}
\date{\currenttime \;  \today}

\subjclass[2010]{47A75, 47A05,47A10, 47A55,47A50,47A58 }
\keywords{densely defined operator, closed operator, reduced minimum modulus, minimum modulus, minimum attaining operator, reduced minimum attaining operator, gap metric, carrier graph topology, Moore-Penrose inverse.}

\date{Received: xxxxxx; Revised: yyyyyy; Accepted: zzzzzz.
\newline \indent $^{*}$Corresponding author}

\begin{abstract}
Let $H_1, H_2$ be complex Hilbert spaces and $T$ be a densely defined closed linear operator from
its domain $D(T)$, a dense subspace of $H_1$, into $H_2$. Let
$N(T)$ denote the null space of $T$ and $R(T)$ denote the range of $T$.

Recall that $C(T) := D(T) \cap N(T)^{\perp}$  is called the {\it carrier space of} $T$ and
the {\it reduced minimum modulus } $\gamma(T)$ of $T$ is defined as:
$$  \gamma(T) := \inf \{\|T(x)\| : x \in C(T), \|x\| = 1 \} .$$

 Further, we say that  $T$
 {\it  attains its reduced minimum  modulus} if there exists $x_0 \in C(T) $ such that   $\|x_0\| = 1$ and  $\|T(x_0)\| =   \gamma(T)$.
 We discuss some properties of operators that attain reduced minimum modulus. In particular, the following results are proved.

\begin{enumerate}

\item
The operator  $T$ attains its reduced minimum modulus if and
only if its Moore-Penrose inverse $T^{\dagger}$ is bounded and attains its norm, that is, there exists $y_0 \in H_2$
such that $\|y_0\| = 1$ and $\|T^{\dagger}\| = \|T^{\dagger}(y_0) \|$.
\item
For each $\epsilon > 0$, there exists a bounded operator $S$ such that $\|S\| \leq \epsilon$ and $T + S$ attains its
reduced minimum.

\end{enumerate}
\end{abstract}

\maketitle

\section{Introduction }

Let $H_1$ and $H_2$ be complex Hilbert spaces and $T:H_1\rightarrow H_2$ be a bounded linear operator. We say $T$ to be {\it norm attaining} if there exists $x_0\in H_1$ such that $\|x_0\|=1$ and $\|Tx_0\|=\|T\|$. The norm attaining operators are well studied in the literature by several authors (see \cite{shkarin} for details and references there in). A well known theorem in this connection is the Lindestrauss theorem which asserts the denseness of norm attaining operators in the space of bounded linear operators between two Hilbert spaces with respect to the operator norm (see for example, \cite{enfloetal} for a simple proof of this fact).

A natural analogue for this class of operators is the class of minimum attaining operators. Recall that a bounded  operator $T:H_1\rightarrow H_2$ is said to be {\it minimum attaining}, if there exists $x_0\in H_1$ with $\|x_0\|=1$ such that
$\|Tx_0\|=m(T)$, the {\it minimum modulus} of $T$. This class of operators was first introduced by Carvajal and Neves in \cite{carvajalneves2} and several basic properties were also studied in the line of norm attaining operators.

 A Lindenstrauss type theorem for  minimum attaining operators is proved in \cite{shkgrminattaining}. Moreover, rank one perturbations of closed operators is also discussed.

In this article, we define operators that attain the reduced minimum  modulus and establish several basic properties of such operators. We prove that if a densely defined closed operator $T$ attains its reduced minimum, then its Moore-Penrose inverse $T^{\dagger}$ is bounded and attains its norm. It turns out that this class is a subclass of minimum attaining operators as well as the class of closed range operators. Finally, we observe that this class is  dense in the class of densely defined closed operators with respect to the gap metric as well as with respect to the carrier graph topology (see \cite{shkgrcgt} for details). We prove several consequences of this result.

 In the second section we summarize without proofs the relevant material on densely defined closed operators, the gap metric and the carrier graph topology. In the third section we define the reduced minimum attaining operators, prove some of the basic and important properties of such operators and compare with those of minimum attaining operators. In proving most of our results, we make use of the corresponding result for minimum attaining operators, which can be found in \cite{shkgrminattaining} and \cite{shkgr7}. In Section four we give a correct formula for the distance between a symmetric operator and a scalar multiple of the identity operator in the metric defined in \cite{shkgrcgt}.

\section{Preliminaries}

Through out we consider infinite dimensional complex Hilbert spaces which will be denoted by $H, H_1,H_2$ etc. The inner product and the
induced norm are denoted  by  $\langle \cdot\rangle$ and
$||.||$, respectively.  The closure of a subspace $M$ of $H$ is denoted by $\overline{M}$. We denote the unit sphere of $M$ by $S_M={\{x\in M:\|x\|=1}\}$.

Let $T$ be a linear operator with domain $D(T)$, a subspace of $H_1$ and taking values in $H_2$. If $D(T)$ is dense in $H_1$, then $T$ is called a densely defined operator.

The  graph $G(T)$ of $T$ is defined by  $G(T):={\{(Tx,x):x\in D(T)}\}\subseteq H_1\times H_2$. If $G(T)$ is
closed, then $T$ is called a closed operator.  Equivalently, $T$ is closed if and only if  if $(x_n)$ is a sequence in  $D(T)$  such that  $x_n\rightarrow x\in H_1$ and $Tx_n\rightarrow y\in H_2$, then $x\in D(T)$ and $Tx=y$.

For a densely defined operator, there exists a unique linear operator (in fact, a closed operator) $T^*:D(T^*)\rightarrow H_1$, with
\begin{equation*}
D(T^*):={\{y\in H_2: x\rightarrow \langle Tx,y\rangle \, \text{for all}\, x\in D(T)\,\text{is continuous}}\}\subseteq H_2
\end{equation*}
 satisfying $\langle Tx,y\rangle =\langle x,T^*y\rangle$ for all $x\in D(T)$ and $y\in D(T^*)$.
We say $T$ to be bounded if there exists $M>0$ such that $\|Tx\|\leq M\|x\|$ for all $x\in D(T)$. Note that if $T$ is densely defined and bounded then $T$ can be extended to all of $H_1$ in a unique way.

By the closed graph Theorem \cite{rud}, an everywhere defined
closed operator is bounded.  Hence the domain of an unbounded closed operator is a proper subspace of a Hilbert space.

The space of all bounded linear operators between $H_1$ and $H_2$ is
denoted by $\mathcal B(H_1,H_2)$ and the class of all densely defined, closed linear operators between $H_1$ and $H_2$ is denoted by $\mathcal C(H_1,H_2)$. We write $\mathcal B(H,H)=\mathcal B(H)$  and $\mathcal C(H,H)=\mathcal C(H)$.

If $T\in \mathcal C(H_1,H_2)$, then  the null space and the range space of $T$ are denoted by $N(T)$ and $R(T)$ respectively and the space $C(T):=D(T)\cap N(T)^\bot$ is called the carrier of $T$.
In fact, $D(T)=N(T)\oplus^\bot C(T)$ \cite[page 340]{ben}.

 Let $T_C:=T|_{C(T)}$.  As $\overline{C(T)}=N(T)^{\bot}$ (see \cite[Lemma 3.3]{shkgrcgt} for details),  $T\in \mathcal C(N(T)^{\bot},H_2)$.

 Let $S,T\in \mathcal C(H)$ be operators with domains $D(S)$ and $D(T)$, respectively. Then $S+T$ is an operator with domain $D(S+T)=D(S)\cap D(T)$ defined by $(S+T)(x)=Sx+Tx$ for all $x\in D(S+T)$.
 The operator $ST$  has the domain $D(ST)={\{x\in D(T): Tx\in D(S)}\}$ and is defined as $(ST)(x)=S(Tx)$ for all $x\in D(ST)$.

If $S$ and $T$ are closed operators  with the property that $D(T)\subseteq D(S)$ and $Tx=Sx$ for all $x\in D(T)$, then $T$ is called the restriction of $S$ and $S$ is called an extension of $T$. We denote this by $T\subseteq S$.

An operator $T\in \mathcal C(H)$ is said to be
\begin{enumerate}
 \item normal if $T^*T=TT^*$
  \item self-adjoint if $T=T^*$
  \item symmetric if $T\subseteq T^*$
 \item positive if  $T=T^*$ and $\langle Tx,x\rangle \geq 0$ for all $x\in D(T)$.
\end{enumerate}

Let $V\in \mathcal B(H_1,H_2)$. Then $V$ is called
\begin{enumerate}
\item an isometry if $\|Vx\|=\|x\|$ for all $x\in H_1$
\item a partial isometry if $V|_{N(V)^{\bot}}$ is an isometry. The space $N(V)^{\bot}$ is called the initial space or the initial domain and the space $R(V)$ is called the final space or the final domain of $V$.
\end{enumerate}

If $M$ is a closed subspace of a Hilbert space $H$, then $P_M$ denotes the orthogonal projection $P_M:H\rightarrow H$ with range $M$,  and $S_M$ denotes the unit sphere of $M$.

Here we recall definition and properties of the Moore-Penrose inverse (or generalized inverse) of a densely defined closed operator that we need for our purpose.
 \begin{defn}\label{geninv}(Moore-Penrose Inverse)\cite[Pages 314, 318-320]{ben}
Let $T\in \mathcal C(H_1,H_2)$. Then there exists
a unique  operator $T^\dagger \in \mathcal
C(H_2,H_1)$ with domain $D(T^\dagger)=R(T)\oplus ^\bot R(T)^\bot$
and has the following properties:
\begin{enumerate}
\item $TT^\dagger y=P_{\overline{R(T)}}~y, ~\text{for all}~y\in D(T^\dagger)$

\item $T^\dagger Tx=P_{N(T)^\bot} ~x, ~\text{for all}~x\in D(T)$

\item $N(T^\dagger)=R(T)^\bot$.
\end{enumerate}
This unique operator $T^\dagger$ is called the \textit{Moore-Penrose inverse} or the \textit{generalized inverse}  of $T$.\\
The following property of $T^\dagger$ is also well known. \noindent
For every $y\in D(T^\dagger)$, let $$L(y):=\Big\{x\in D(T):
||Tx-y||\leq ||Tu-y||\quad \text{for all} \quad u\in D(T)\Big\}.$$
 Here any $u\in L(y)$ is called a \textit{least square solution} of the operator equation $Tx=y$. The vector  $T^\dagger y\in L(y),\,||T^\dagger y||\leq ||x||\quad \text{for all} \quad x\in L(y)$
 and it is called the  \textit{least square solution of minimal norm}.
 A different treatment of $T^\dagger$ is given in \cite[Pages 336, 339, 341]{ben},
 where it is called ``\textit{the Maximal Tseng generalized Inverse}".
\end{defn}
\begin{theorem}\cite[Page 320]{ben}\label{propertiesofgeninve1}
Let $T\in \mathcal C(H_1,H_2)$. Then
\begin{enumerate}
\item $D(T^\dagger)=R(T)\oplus^\bot R(T)^\bot, \quad
N(T^\dagger)=R(T)^\bot=N(T^*)$
\item $R(T^\dagger)=C(T)$
\item  $ T^\dagger \in \mathcal C(H_2,H_1)$
\item $T^\dagger$ is continuous if and only $R(T)$ is closed
\item $T^{\dagger \dagger}=T$
\item $T^{* \dagger}=T^{\dagger *}$
\item $N(T^{* \dagger})=N(T)$
\item $T^*T$ and $T^\dagger T^{* \dagger}$ are positive and $(T^*T)^\dagger =T^\dagger T^{*
\dagger}$
\item $TT^*$ and $ T^{* \dagger}T^\dagger$ are positive and $(TT^*)^\dagger= T^{*
\dagger}T^\dagger$.
\end{enumerate}
\end{theorem}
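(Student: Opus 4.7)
The plan is to work directly from the three defining properties of $T^\dagger$ in Definition~\ref{geninv}, combined with the orthogonal decomposition $D(T) = N(T) \oplus^\bot C(T)$ and the classical identity $R(T)^\bot = N(T^*)$ for densely defined closed operators. The nine assertions fall into natural groups, and I would treat them in a cascade so that later items reuse earlier ones.

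For items (1)--(4), the formula $D(T^\dagger) = R(T) \oplus^\bot R(T)^\bot$ and the identity $N(T^\dagger) = R(T)^\bot$ are built into the definition, while $R(T)^\bot = N(T^*)$ is standard. For (2), the relation $T^\dagger T x = P_{N(T)^\bot}\, x$ forces $T^\dagger$ to take values in $D(T) \cap N(T)^\bot = C(T)$, and restricting $T$ to $C(T)$ shows that $T^\dagger$ is a bijection from $R(T)$ onto $C(T)$. For closedness (3), given $y_n \in D(T^\dagger)$ with $y_n \to y$ and $T^\dagger y_n \to x$, I would split $y_n = y_n^{(1)} + y_n^{(2)}$ with $y_n^{(1)} \in R(T)$ and $y_n^{(2)} \in R(T)^\bot$, use $T(T^\dagger y_n) = y_n^{(1)}$, and invoke closedness of $T$ to conclude that $x \in C(T)$ and $Tx = y^{(1)}$. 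Item (4) is then the closed graph theorem applied to the closed densely defined operator $T^\dagger$: boundedness forces $D(T^\dagger) = H_2$, i.e., $R(T)$ is closed.

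For (5), I would verify that $T$ itself satisfies the three properties characterizing $(T^\dagger)^\dagger$ and invoke uniqueness. The main obstacle is (6), the identification $T^{*\dagger} = T^{\dagger *}$. My plan is to check that $T^{\dagger *}$ fulfills the three defining properties for the Moore--Penrose inverse of $T^*$: its kernel equals $R(T^\dagger)^\bot = C(T)^\bot = N(T) = R(T^*)^\bot$, and the products $T^* T^{\dagger *}$ and $T^{\dagger *} T^*$ collapse to the required orthogonal projections via the adjoint identity $\langle T x, T^{\dagger *} z \rangle = \langle T^\dagger T x, z \rangle$ together with careful domain bookkeeping. Because $T$ is unbounded this is delicate; the cleanest route I foresee is to invoke the polar decomposition $T = U |T|$ with $U$ a partial isometry having initial space $N(T)^\bot$ and final space $\overline{R(T)}$, read off the formulas $T^\dagger = |T|^\dagger U^*$, $T^* = |T| U^*$ and $T^{*\dagger} = U |T|^\dagger$, and compare with $(|T|^\dagger U^*)^*$.

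Once (6) is in hand, (7) is immediate: $N(T^{*\dagger}) = N(T^{\dagger *}) = R(T^\dagger)^\bot = C(T)^\bot = N(T)$. For (8) and (9), positivity of $T^*T$, $TT^*$ and of the products $T^\dagger T^{*\dagger}$, $T^{*\dagger}T^\dagger$ is built into their form as $A^*A$. The identities $(T^*T)^\dagger = T^\dagger T^{*\dagger}$ and $(TT^*)^\dagger = T^{*\dagger}T^\dagger$ I would establish by showing that the respective right-hand sides satisfy the three defining properties for the Moore--Penrose inverses of $T^*T$ and $TT^*$, using (5) and (6) to shuttle between $T$ and $T^*$, and invoking uniqueness from Definition~\ref{geninv} one final time.
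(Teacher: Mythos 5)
The paper does not actually prove this theorem: it is quoted from Ben--Israel and Greville \cite[Page 320]{ben} and used as a black box, so there is no in-paper argument to compare yours against. Your reconstruction from the three defining properties in Definition~\ref{geninv}, together with $D(T)=N(T)\oplus^\bot C(T)$ and $R(T)^\bot=N(T^*)$, is the standard textbook development, and the cascade (1)--(4), then (5), then (6), then (7)--(9) via uniqueness of the Moore--Penrose inverse is sound in outline. Two places need repair. For (4), boundedness of $T^\dagger$ alone does not ``force $D(T^\dagger)=H_2$''; you must combine boundedness with the closedness from (3) to conclude that $D(T^\dagger)=R(T)\oplus^\bot R(T)^\bot$ is a closed subspace, and then with its density in $H_2$ to get $D(T^\dagger)=H_2$, whence $R(T)=\overline{R(T)}$; the converse direction is where the closed graph theorem enters. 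For (6), the polar-decomposition route $T^\dagger=|T|^\dagger U^*$, $T^{*\dagger}=U|T|^\dagger$ quietly uses $|T|^{\dagger *}=|T|^\dagger$, which is itself assertion (6) for the self-adjoint operator $|T|$, so as written the argument is circular; you must first prove self-adjointness of $|T|^\dagger$ independently (for instance by verifying directly that $|T|^{\dagger *}$ satisfies the three defining properties of $|T|^\dagger$, which is tractable because $N(|T|)=R(|T|)^\bot$ in the self-adjoint case), and you should also actually check that $|T|^\dagger U^*$ and $U|T|^\dagger$ satisfy the defining properties of $T^\dagger$ and $T^{*\dagger}$ rather than just reading the formulas off. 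With those two points filled in, the remaining items (5), (7), (8), (9) follow as you describe.
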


\begin{defn}\label{normattainingdefn}
Let $T\in \mathcal B(H_1,H_2)$. Then $T$ is said to be {\it norm attaining} if there exists $x_0\in S_{H_1}$ such that $\|Tx_0\|=\|T\|$.
\end{defn}
We denote the set of all norm attaining operators between $H_1,H_2$ by $\mathcal N(H_1,H_2)$ and $\mathcal N(H,H)$ by $\mathcal N(H)$.
\begin{defn}\cite{ben, goldberg,taylorminmmod}\label{minmmodulus}
 Let $T\in \mathcal C(H_1,H_2)$. Then
 \begin{align*}
 m(T)&:=\inf{\{\|Tx\|: x\in S_{D(T)}}\}\\
 \gamma(T)&:=\inf{\{\|Tx\|:x\in S_{C(T)}}\},
 \end{align*}
 are  called the {\it minimum modulus} and the {\it reduced minimum modulus} of $T$, respectively. The operator $T$ is said to be bounded  below if and only if $m(T)>0$.
\end{defn}

\begin{rem}
If $T\in \mathcal C(H_1,H_2)$, then
\begin{itemize}
\item[(a)] $m(T)\leq \gamma(T)$ and equality holds if $T$ is one-to-one
\item[(b)] $m(T)>0$ if and only if $R(T)$ is closed and $T$ is one-to-one
    \end{itemize}
\end{rem}

\begin{prop}\label{equivalentclosedrange}\cite{ben,kato}
Let $T\in \mathcal C(H_1,H_2)$. Then the following statements are equivalent;
\begin{enumerate}
\item $R(T)$ is closed
\item $R(T^*)$ is closed
\item $T_0:=T|_{C(T)}$ has a bounded inverse
\item $\gamma(T)>0$
\item \label{minmodreciprocal}$T^{\dagger}$ is bounded. In fact, $\|T^{\dagger}\|=\frac{1}{\gamma(T)}$
\item $R(T^*T)$ is closed
\item $R(TT^*)$ is closed.
\end{enumerate}
\end{prop}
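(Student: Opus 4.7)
The plan is to collapse the seven conditions onto one underlying fact — the boundedness of the inverse of the restriction $T_0 = T|_{C(T)}$ — and then to use the Banach closed range theorem and the Moore--Penrose identities listed in Theorem \ref{propertiesofgeninve1} for the rest. Since $C(T) \cap N(T) = \{0\}$, the operator $T_0$ is injective and $R(T_0) = R(T)$, so the linear bijection $T_0^{-1} : R(T) \to C(T)$ is well defined. I would first dispose of the easy block $(3) \iff (4) \iff (5)$ together with the norm identity, then establish the central equivalence $(1) \iff (3)$, and finally bring in $(2)$, $(6)$, $(7)$.

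The equivalence $(3) \iff (4)$ is just a rephrasing of the definition of $\gamma(T)$: the inequality $\|T_0^{-1} y\| \le c\|y\|$ on $R(T)$ is the same as $\|Tx\| \ge c^{-1}\|x\|$ on $C(T)$. For $(4) \iff (5)$, I would use items $(1)$--$(3)$ of Definition \ref{geninv}: $T^\dagger$ annihilates $R(T)^\perp$, and on $R(T)$ it coincides with $T_0^{-1}$, as one sees from $TT^\dagger y = P_{\overline{R(T)}}\,y$ and $T^\dagger T x = P_{N(T)^\perp}\,x$. Hence $T^\dagger$ is bounded on its domain $R(T) \oplus^\perp R(T)^\perp$ if and only if $T_0^{-1}$ is bounded on $R(T)$, with $\|T^\dagger\| = \|T_0^{-1}\| = 1/\gamma(T)$.

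The core step is $(1) \iff (3)$. Assume $R(T)$ is closed. Then $R(T)$ is a Hilbert space and $T_0 : C(T) \to R(T)$ is a densely defined closed bijection onto it, with closedness of $T_0$ inherited from $T$ via the decomposition $D(T) = N(T) \oplus^\perp C(T)$. Therefore $T_0^{-1}$ is closed and everywhere defined on the Hilbert space $R(T)$, and so it is bounded by the closed graph theorem. Conversely, if $T_0^{-1}$ is bounded and $Tx_n \to y$ with $x_n \in C(T)$, then the estimate $\|x_n - x_m\| \le \|T_0^{-1}\|\,\|Tx_n - Tx_m\|$ makes $(x_n)$ Cauchy in $N(T)^\perp$, so it converges to some $x \in N(T)^\perp$; closedness of $T$ gives $x \in D(T)$ and $Tx = y$, so $y \in R(T)$.

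Finally, $(1) \iff (2)$ is the classical Banach closed range theorem, valid in $\mathcal C(H_1,H_2)$. For $(6)$ and $(7)$, I would use the containments $R(T^*T) \subseteq R(T^*)$ and $R(TT^*) \subseteq R(T)$ together with the well-known density identities $\overline{R(T^*T)} = \overline{R(T^*)}$ and $\overline{R(TT^*)} = \overline{R(T)}$: closedness of $R(T^*T)$ forces it to equal $R(T^*)$, yielding $(2)$; conversely, $(1)$ makes $T^\dagger$ and $T^{*\dagger}$ both bounded, and the identity $(T^*T)^\dagger = T^\dagger T^{*\dagger}$ from Theorem \ref{propertiesofgeninve1} combined with the already established $(1) \iff (5)$ applied to the positive operator $T^*T$ forces $R(T^*T)$ to be closed; the case of $TT^*$ is symmetric. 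The main obstacle I anticipate is the direction $(1) \implies (3)$: one must keep careful track of the distinction between $C(T)$ and its closure $N(T)^\perp$, so that $T_0^{-1}$ is genuinely everywhere defined on the full closed range $R(T)$ and the closed graph theorem applies without gaps.
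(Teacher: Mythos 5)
Your proposed proof is correct, but there is nothing in the paper to compare it against: Proposition~\ref{equivalentclosedrange} is stated as a known result with citations to \cite{ben} and \cite{kato}, and no proof is given in the text. Your argument is a legitimate self-contained derivation, and the reduction of everything to the boundedness of $T_0^{-1}$ is the standard route the cited sources take. Each block checks out: $(3)\Leftrightarrow(4)$ is indeed a reformulation of the definition of $\gamma(T)$; for $(4)\Leftrightarrow(5)$ your identification $T^{\dagger}|_{R(T)}=T_0^{-1}$, $T^{\dagger}|_{R(T)^{\perp}}=0$ gives $\|T^{\dagger}\|=\|T_0^{-1}\|=1/\gamma(T)$ exactly as claimed; and the two directions of $(1)\Leftrightarrow(3)$ (closed graph theorem one way, the Cauchy-sequence argument the other way) are sound. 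Two points deserve to be made explicit rather than left implicit. First, the closedness of $T_0$, which both directions of $(1)\Leftrightarrow(3)$ use, needs the one-line verification that a convergent sequence from $C(T)$ has its limit in the closed subspace $N(T)^{\perp}$, so that closedness of $T$ places the limit in $D(T)\cap N(T)^{\perp}=C(T)$; you gesture at this in your final sentence but should write it out. Second, the treatment of $(6)$ and $(7)$ silently invokes von Neumann's theorem: you need $T^*T$ to be densely defined, closed (indeed self-adjoint) with $N(T^*T)=N(T)$ both to justify $\overline{R(T^*T)}=\overline{R(T^*)}$ and to be entitled to apply the already-proved equivalence $(1)\Leftrightarrow(5)$ to the operator $T^*T$ itself. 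With those two acknowledgments added, the proof is complete.
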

\begin{rem}\label{reciprocalinversenorm}
 If $T\in \mathcal C(H)$ and $T^{-1}\in \mathcal B(H)$, then $m(T)=\frac{1}{\|T^{-1}\|}$, by (\ref{minmodreciprocal}) of Proposition \ref{equivalentclosedrange}.

\end{rem}
\begin{theorem}\label{squareroot}\cite[theorem 13.31, page 349]{rud}\cite[Theorem 4, page 144]{birmannsolomyak}
Let $T\in \mathcal C(H)$ be positive. Then there exists a unique positive operator $S$ such that
$T=S^2$. The operator $S$ is called the square root of $T$ and is denoted by $S=T^\frac{1}{2}$.
\end{theorem}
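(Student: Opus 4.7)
The plan is to invoke the spectral theorem for unbounded self-adjoint operators and construct $S$ via the Borel functional calculus applied to $f(\lambda)=\sqrt{\lambda}$. First I would observe that a positive $T\in\mathcal{C}(H)$ is automatically self-adjoint by the paper's definition, so the spectral theorem supplies a unique projection-valued measure $E$ on the Borel subsets of $\mathbb{R}$ with
\[
T=\int_{\mathbb{R}}\lambda\,dE(\lambda),\qquad D(T)=\Bigl\{x\in H:\int_{\mathbb{R}}\lambda^{2}\,d\langle E(\lambda)x,x\rangle<\infty\Bigr\}.
\]
Positivity forces $E$ to be supported in $[0,\infty)$: otherwise one could pick a unit vector $x$ in the range of $E((-\infty,0))$, and the spectral representation would give $\langle Tx,x\rangle<0$, contradicting $\langle Tx,x\rangle\ge 0$ on $D(T)$.

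Next I would define
\[
S:=\int_{0}^{\infty}\sqrt{\lambda}\,dE(\lambda),\qquad D(S)=\Bigl\{x\in H:\int_{0}^{\infty}\lambda\,d\langle E(\lambda)x,x\rangle<\infty\Bigr\}.
\]
Standard properties of the functional calculus deliver, in one stroke, that $S\in\mathcal{C}(H)$, $S=S^{*}$, and $\langle Sx,x\rangle\ge 0$ on $D(S)$, so $S$ is positive in the sense of the paper. The multiplicativity identity for the functional calculus, applied to $\sqrt{\lambda}\cdot\sqrt{\lambda}=\lambda$, then yields $S^{2}=T$ as an equality of operators, domains included.

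For uniqueness, I would assume $S_{1}\in\mathcal{C}(H)$ is positive with $S_{1}^{2}=T$ and apply the spectral theorem to $S_{1}$, obtaining $S_{1}=\int_{0}^{\infty}\mu\,dF(\mu)$. Squaring gives $T=\int_{0}^{\infty}\mu^{2}\,dF(\mu)$, and the uniqueness clause of the spectral theorem for $T$ forces $F(B)=E(\{\lambda\in[0,\infty):\sqrt{\lambda}\in B\})$ for every Borel $B\subseteq[0,\infty)$. This pins $F$ down, so $S_{1}=S$.

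The main technical obstacle is the domain bookkeeping for the product $S^{2}$: one has to verify that $D(S^{2})=\{x\in D(S):Sx\in D(S)\}$ coincides with $D(T)$, not merely that the two operators agree on a common core. This reduces to the change-of-variables identity $\int_{0}^{\infty}\lambda^{2}\,d\langle E(\lambda)x,x\rangle=\int_{0}^{\infty}(\sqrt{\lambda})^{4}\,d\langle E(\lambda)x,x\rangle$, which makes the two integrability conditions literally the same, so that the composition $S\circ S$ is genuinely $T$ as unbounded operators.
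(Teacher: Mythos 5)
The paper offers no proof of this statement; it is quoted verbatim from the cited sources (Rudin, Theorem~13.31, and Birman--Solomjak), and the proof in those references is exactly the spectral-theorem/functional-calculus construction you give, so your approach coincides with the paper's. Your argument is essentially correct, including the domain bookkeeping for $S^2$ and the uniqueness step via the push-forward of the spectral measure under $\mu\mapsto\mu^2$. One small repair: a unit vector $x$ in the range of $E((-\infty,0))$ need not lie in $D(T)$, so $\langle Tx,x\rangle$ may be undefined; instead pick $n$ with $E([-n,-1/n])\neq 0$ (possible if $E((-\infty,0))\neq 0$) and take $x$ a unit vector in its range, which does lie in $D(T)$ and satisfies $\langle Tx,x\rangle\le -\|x\|^2/n<0$.
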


\begin{theorem}\cite[Theorem 2, page 184]{birmannsolomyak}
Let $T\in \mathcal C(H_1,H_2)$. Then there exists a unique partial isometry $V:H_1\rightarrow H_2$ with initial space $\overline{R(T^*)}$ and range $\overline{R(T)}$ such that $T=V|T|$.
\end{theorem}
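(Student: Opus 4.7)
The plan is to carry out the standard polar decomposition construction, adapted to the closed unbounded setting. First, I would form the positive self-adjoint operator $T^*T \in \mathcal{C}(H_1)$ and, via Theorem \ref{squareroot}, obtain its unique positive square root $|T| := (T^*T)^{1/2}$. A technical input I would invoke is that $D(|T|) = D(T)$ and that $D(T^*T)$ is a common core for $T$ and $|T|$ — this is the content of von Neumann's theorem on $T^*T$ and is standard in the references cited in the excerpt. The pivotal identity is
\[ \||T|x\|^{2} = \langle |T|^{2}x, x \rangle = \langle T^*Tx, x \rangle = \|Tx\|^{2}, \qquad x \in D(T^*T), \]
which extends to all $x \in D(T)$ by passing to the core.

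Next, I would define $V_0 : R(|T|) \to R(T)$ by $V_0(|T|x) := Tx$ for $x \in D(T)$. The identity above shows $V_0$ is well-defined (indeed, $|T|x_1 = |T|x_2$ forces $\|T(x_1 - x_2)\| = 0$) and norm-preserving. Extend $V_0$ by continuity to an isometry $V_1 : \overline{R(|T|)} \to H_2$, and then extend $V_1$ to $V \in \mathcal{B}(H_1,H_2)$ by declaring $V \equiv 0$ on $\overline{R(|T|)}^{\perp}$. The resulting operator $V$ is a partial isometry, and the factorization $T = V|T|$ on $D(T)$ is immediate from the definition.

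Then I would identify the initial and final spaces. The initial space is $\overline{R(|T|)}$; since $|T|$ is self-adjoint, $\overline{R(|T|)} = N(|T|)^{\perp}$. The isometric identity yields $N(|T|) = N(T)$, and general adjoint theory gives $N(T)^{\perp} = \overline{R(T^*)}$, so the initial space is $\overline{R(T^*)}$ as required. The final space is $\overline{R(V)}$; since $R(V)$ contains $V_0(R(|T|)) = R(T)$ by construction and is contained in $\overline{R(T)}$ by continuity, taking closures gives $\overline{R(V)} = \overline{R(T)}$.

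For uniqueness, suppose $W$ is another partial isometry with initial space $\overline{R(T^*)}$ satisfying $T = W|T|$. Then $W$ agrees with $V$ on $R(|T|)$, hence on $\overline{R(|T|)} = \overline{R(T^*)}$ by boundedness, and both vanish on the orthogonal complement, forcing $W = V$. The main obstacle I anticipate is the domain identity $D(|T|) = D(T)$: this is the unbounded-analytic ingredient that allows the isometric identity, valid a priori only on the algebraic core $D(T^*T)$, to propagate to the natural domain on which $T$ (and hence the factorization $T = V|T|$) is defined.
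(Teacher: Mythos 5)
This theorem is quoted in the paper from Birman--Solomyak without proof (it appears as a cited preliminary), so there is no in-paper argument to compare against. Your construction is the standard polar-decomposition argument for closed densely defined operators and is correct; the one genuinely delicate point --- extending the isometric identity $\|\,|T|x\,\|=\|Tx\|$ from the core $D(T^*T)$ to all of $D(T)=D(|T|)$ via von Neumann's theorem --- is exactly the one you identify and address.
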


\begin{rem}
For $T\in \mathcal C(H_1,H_2)$, the operator  $|T|:=(T^*T)^\frac{1}{2}$ is called the modulus of $T$. Moreover,  $D(|T|)=D(T),\; N(|T|)=N(T)$ and $\overline{R(|T|)}=\overline{R(T^*)}$.
 As $\|Tx\|=\||T|x\|$ for all $x\in D(T)$,  we can conclude that  $m(T)=m(|T|)$, and $\gamma(T)=\gamma(|T|)$.
 \end{rem}
\begin{defn}\cite[page 346]{rud}
 Let $T\in \mathcal C(H)$. The resolvent of $T$ is defined by
 \begin{equation*}
 \rho(T):={\{\lambda \in \mathbb C: T-\lambda I:D(T)\rightarrow H\; \text{is invertible and}\; (T-\lambda I)^{-1}\in \mathcal B(H)}\}
 \end{equation*}
and
\begin{align*}
\sigma(T):&=\mathbb C \setminus \rho(T)\\
\sigma_p(T):&={\{\lambda \in \mathbb C: T-\lambda I:D(T)\rightarrow H \; \text{is not one-to-one}}\},
\end{align*}
are called the  spectrum and the point spectrum of $T$, respectively.
\end{defn}

 \begin{defn}\cite[Page 267]{kato}
 Let $T\in \mathcal C(H)$. Then
 the \textit{numerical range} of $T$ is defined by
\begin{equation*}
W(T):=\Big\{\langle Tx,x\rangle: x\in S_{D(T)}\Big\}.
\end{equation*}

 \end{defn}
The following Proposition is proved in \cite[Chapter 10]{lance} for regular (unbounded) operators between Hilbert $C^*$-modules, which is obviously true for densely defined closed operators in a Hilbert space.
\begin{prop}\cite[Lemma 5.8]{schmudgen} Let $T\in \mathcal C(H)$.  Let $Q_T:=(I+T^*T)^{-\frac{1}{2}}$ and $F_T:=TQ_T$. Then
\begin{enumerate}
\item $Q_T\in \mathcal B(H)$ and $0\leq Q_T\leq I$
\item $R(Q_T)=D(T)$
\item $(F_T)^*=F_{T^*}$
\item $\|F_T\|<1$ if and only if $T\in \mathcal B(H_1,H_2)$
\item $T=F_T(I-F_T^*F_T)^{-\frac{1}{2}}$
\item $Q_T=(I-F_T^*F_T)^{\frac{1}{2}}$.
\end{enumerate}
The operator $F_T$ is called the bounded transform of $T$ or the $z$-transform of $T$.
\end{prop}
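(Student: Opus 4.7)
The plan is to take the spectral theorem (continuous functional calculus) applied to the positive self-adjoint operator $A := T^*T \in \mathcal{C}(H)$ as the main engine. Since $\sigma(A) \subseteq [0,\infty)$, the spectrum of $I+A$ lies in $[1,\infty)$, so $(I+A)^{-1}$ is everywhere defined, bounded, self-adjoint and satisfies $0 < (I+A)^{-1} \leq I$. Taking its unique positive square root (Theorem~\ref{squareroot}) gives $Q_T \in \mathcal{B}(H)$ with $0 \leq Q_T \leq I$, proving (1). For (2), $Q_T$ is injective with inverse $(I+A)^{1/2}$, so $R(Q_T) = D((I+A)^{1/2})$; matching the spectral integrals of $\lambda$ and $1+\lambda$ against the spectral measure of $A$ shows $D((I+A)^{1/2}) = D(A^{1/2}) = D(|T|) = D(T)$.

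Items (5), (6) and (4) all follow from a single functional-calculus identity. Since $Q_T$ is a Borel function of $A$ it commutes with $A$, and working first on the core $D(A)$ and then extending by density yields
$$F_T^*F_T \;=\; Q_T\, A\, Q_T \;=\; A(I+A)^{-1} \;=\; I - (I+A)^{-1} \;=\; I - Q_T^2.$$
Uniqueness of the positive square root of this bounded operator forces $Q_T = (I - F_T^*F_T)^{1/2}$, giving (6). For (5), the operator $(I-F_T^*F_T)^{-1/2} = Q_T^{-1}$ has domain $R(Q_T) = D(T)$ and $F_T Q_T^{-1} = TQ_T\cdot Q_T^{-1} = T$ there. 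For (4), the identity $\|F_T x\|^2 = \langle(I - Q_T^2)x, x\rangle = \|x\|^2 - \|Q_T x\|^2$ shows $\|F_T\|<1$ iff $Q_T$ is bounded below, iff $Q_T^{-1} = (I+A)^{1/2}$ is bounded, iff $A = T^*T$ is bounded, iff $T$ is bounded.

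The hard part will be (3). Boundedness of $F_T$ already forces $F_T^* \in \mathcal{B}(H)$, and a direct dual computation gives $F_T^* y = Q_T T^* y$ for every $y \in D(T^*)$. Thus everything reduces to the intertwining identity
$$(I+T^*T)^{-1/2}\, T^*\;=\;T^*\,(I+TT^*)^{-1/2} \qquad \text{on } D(T^*).$$
I would first verify the resolvent version $(I+T^*T)^{-1}T^* = T^*(I+TT^*)^{-1}$ via the purely algebraic manipulation $(I+T^*T)\, T^* z = T^*(I+TT^*)\, z$ on a suitable dense core, then transfer to the $(-1/2)$-powers using the integral representation
$$(I+B)^{-1/2} \;=\; \frac{1}{\pi}\int_0^\infty s^{-1/2}(I+s+B)^{-1}\,ds$$
applied to $B = T^*T$ and $B = TT^*$. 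Since both sides of the desired identity are bounded and $D(T^*)$ is dense, this yields (3) on all of $H$. With the intertwining secured, the remaining items combine to prove the proposition.
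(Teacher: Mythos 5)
The paper itself offers no proof of this proposition---it is stated with a citation to Schm\"udgen \cite[Lemma 5.8]{schmudgen} (and to Lance \cite{lance})---and your argument via the functional calculus for $T^*T$, the identity $F_T^*F_T=I-Q_T^2$, and the intertwining relation $(I+T^*T)^{-1/2}T^*\subseteq T^*(I+TT^*)^{-1/2}$ is essentially the standard proof given in those sources, and it is sound. The only step needing explicit care is the passage from the resolvent intertwining $\check{T}T^*\subseteq T^*\widehat{T}$ (the paper's Lemma \ref{boundedclosure}) to its square-root version, which your integral representation of $(I+B)^{-1/2}$ handles correctly.
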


\begin{lemma}\cite{schock,gr92,ped}\label{boundedclosure}
 Let $T\in \mathcal C(H_1,H_2)$. Denote $\check T=(I+T^*T)^{-1}$ and $\widehat T=(I+TT^*)^{-1}$. Then
\begin{enumerate}
\item $\check T\in \mathcal B(H_1)$, $\widehat T\in \mathcal
B(H_2)$
\item $\widehat TT\subseteq T\check T$,\quad $||T\check T||\leq \displaystyle
\frac{1}{2}$ and $\check TT^*\subseteq T^*\widehat T$,\quad
$||T^*\widehat T||\leq \displaystyle \frac{1}{2}$.

\end{enumerate}
\end{lemma}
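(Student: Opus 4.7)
The plan is to reduce everything to the classical von Neumann/Kato theorem that for $T \in \mathcal C(H_1, H_2)$, the operator $T^*T$ is a densely defined, positive, self-adjoint closed operator and that $I + T^*T : D(T^*T) \to H_1$ is a bijection (and analogously for $TT^*$). From this, part (1) is essentially immediate: since $T^*T$ is positive self-adjoint, $\sigma(I+T^*T) \subseteq [1,\infty)$, so $\check T = (I+T^*T)^{-1}$ is everywhere defined and bounded with $\|\check T\| \leq 1$. Applying the same reasoning to $T^*$ (using $T^{**}=T$) yields $\widehat T \in \mathcal B(H_2)$ with $\|\widehat T\| \leq 1$.

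For the norm bound $\|T\check T\| \leq 1/2$ in part (2), I would let $u \in H_1$ and set $x = \check T u \in D(T^*T)$, so that $u = x + T^*Tx$. Expanding the inner product and using $\langle x, T^*Tx\rangle = \|Tx\|^2$ gives
\begin{equation*}
\|u\|^2 = \|x\|^2 + 2\|Tx\|^2 + \|T^*Tx\|^2.
\end{equation*}
By AM--GM and Cauchy--Schwarz, $\|x\|^2 + \|T^*Tx\|^2 \geq 2\|x\|\|T^*Tx\| \geq 2\langle x, T^*Tx\rangle = 2\|Tx\|^2$, so $\|u\|^2 \geq 4\|Tx\|^2 = 4\|T\check T u\|^2$, giving the stated bound. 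The companion bound $\|T^*\widehat T\| \leq 1/2$ follows by applying the same argument with $T$ replaced by $T^*$.

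For the inclusion $\widehat T T \subseteq T\check T$, I would take $x \in D(T) = D(\widehat T T)$ and verify both that $\check T x$ lies in the domain on the right and that the two expressions agree. Since $\check T x \in D(T^*T) \subseteq D(T)$, the vector $T\check T x$ makes sense and already belongs to $D(T^*)$ by the definition of $D(T^*T)$. The identity $(I+T^*T)\check T x = x$ rewrites as $T^*T\check T x = x - \check T x$, which lies in $D(T)$ because both $x$ and $\check T x$ do; hence $T\check T x \in D(TT^*)$. A direct calculation now yields
\begin{equation*}
(I+TT^*)(T\check T x) = T\check T x + T(x - \check T x) = Tx,
\end{equation*}
so $T\check T x = \widehat T T x$. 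The companion inclusion $\check T T^* \subseteq T^*\widehat T$ follows by applying the identity just proved with $T$ replaced by $T^*$.

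The main obstacle is the domain bookkeeping in the last step: the proof of $\widehat T T \subseteq T\check T$ hinges on the fact that $T^*T\check T x$ can be rewritten explicitly as $x - \check T x$, which lies in $D(T)$. This observation is precisely what permits one to apply $T$ to it and close the calculation. Everything else reduces either to the spectral theory of positive self-adjoint operators or to elementary Hilbert space inequalities.
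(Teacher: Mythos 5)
Your proof is correct. Note that the paper itself does not prove this lemma at all --- it is quoted with citations to Gramsch--Schock, Groetsch and Pedersen --- so there is no in-paper argument to compare against. Your argument is the standard (essentially von Neumann's) one: part (1) from self-adjointness and positivity of $T^*T$, the bound $\|T\check T\|\leq \tfrac12$ from expanding $\|(I+T^*T)x\|^2$ and AM--GM, and the inclusion $\widehat T T\subseteq T\check T$ by verifying $(I+TT^*)(T\check T x)=Tx$ for $x\in D(T)$; the domain bookkeeping you highlight (that $T^*T\check T x = x-\check T x\in D(T)$ precisely because $x\in D(T)$) is indeed the one point that needs care, and you handle it correctly.
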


One of the most useful and well studied metric on $C(H_1,H_2)$ is the gap metric. Here we give some details.
\begin{defn}[Gap between subspaces]\cite[page 197]{kato}
Let $H$ be a Hilbert space and $M,N$ be closed subspaces of $H$. Let
$P=P_M$ and $Q=P_N$. Then the gap between $M$ and $N$ is defined by
\begin{equation*}
\theta(M, N)=\|P-Q\|.
\end{equation*}
\end{defn}

If $S,T \in \mathcal C(H_1,H_2)$, then  $G(T),G(S)\subseteq
H_1\times H_2$ are closed subspaces. The gap between $G(T)$ and $G(S)$ is called the gap between $T$ and $S$.
For a deeper discussion on these concepts we refer to \cite[Chapter
IV]{kato} and \cite[page 70]{akh}.

We have the following formula for the gap between two closed operators;

\begin{theorem}\cite{shkgrgap}\label{shkgrgapformula}
  Let $S,T\in \mathcal C(H_1,H_2)$. Then the operators $\widehat{T}^{\frac{1}{2}}S\check{S}^{\frac{1}{2}}$, $ T\check{T}^{\frac{1}{2}}\check{S}^{\frac{1}{2}}$,  $S\check{S}^{\frac{1}{2}}\check{T}^{\frac{1}{2}}$ and $\widehat{S}^{\frac{1}{2}}T\check{T}^{\frac{1}{2}}$ are bounded and
  \begin{equation}
  \theta(S,T)=\max \Big\{\left  \|   T\check{T}^{\frac{1}{2}}\check{S}^{\frac{1}{2}}-\widehat{T}^{\frac{1}{2}}S\check{S}^{\frac{1}{2}}\right \|, \left \|S\check{S}^{\frac{1}{2}}\check{T}^{\frac{1}{2}}-\widehat{S}^{\frac{1}{2}}T\check{T}^{\frac{1}{2}}\right \|\Big\}.
  \end{equation}
\end{theorem}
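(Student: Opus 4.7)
The plan is to build explicit isometric parametrizations of $G(T)$ and $G(T)^{\perp}$ from the auxiliary operators $\check{T}^{\frac{1}{2}}$, $\widehat{T}^{\frac{1}{2}}$, and their bounded transforms, and then reduce the gap $\theta(S,T) = \|P_{G(S)} - P_{G(T)}\|$ to the norm of a concrete bounded operator between $H_1$ and $H_2$.

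First, I would introduce $U_T : H_1 \to H_1 \times H_2$ by $U_T x := (\check{T}^{\frac{1}{2}} x,\, T\check{T}^{\frac{1}{2}} x)$. Since $R(\check{T}^{\frac{1}{2}}) = D(|T|) = D(T)$, this is well defined, and
\begin{equation*}
\|U_T x\|^2 = \langle \check{T}^{\frac{1}{2}}(I+T^*T)\check{T}^{\frac{1}{2}} x, x\rangle = \|x\|^2,
\end{equation*}
together with $R(U_T) = G(T)$, shows that $U_T$ is an isometry with $P_{G(T)} = U_T U_T^{*}$. Using the identification $G(T)^{\perp} = \{(-T^* y, y) : y \in D(T^*)\}$ and $R(\widehat{T}^{\frac{1}{2}}) = D(T^*)$, an analogous computation shows that $V_T : H_2 \to H_1 \times H_2$, $V_T y := (-T^*\widehat{T}^{\frac{1}{2}} y,\, \widehat{T}^{\frac{1}{2}} y)$, is an isometry onto $G(T)^{\perp}$, so $I - P_{G(T)} = V_T V_T^{*}$. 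The operators $U_S, V_S$ are defined analogously.

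The second ingredient is the identity $\|P - Q\| = \max(\|(I-P)Q\|,\, \|(I-Q)P\|)$ for orthogonal projections, which I would establish by checking that $(P-Q)^2$ commutes with $P$ and restricts on $R(P)$ and $R(P)^{\perp}$ to $P(I-Q)P$ and $(I-P)Q(I-P)$, respectively. Applied here,
\begin{equation*}
\theta(S,T) = \max\bigl(\|(I - P_{G(T)}) P_{G(S)}\|,\ \|(I - P_{G(S)}) P_{G(T)}\|\bigr).
\end{equation*}

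To finish, I would use that outer isometries and coisometries do not change operator norms, so $\|(I - P_{G(T)}) P_{G(S)}\| = \|V_T V_T^{*} U_S U_S^{*}\| = \|V_T^{*} U_S\|$. Because $(T^*\widehat{T}^{\frac{1}{2}})^{*} = T\check{T}^{\frac{1}{2}}$, equivalently $F_{T^*}^{*} = F_T$ from the bounded-transform identity, the adjoint is $V_T^{*}(u,v) = -T\check{T}^{\frac{1}{2}} u + \widehat{T}^{\frac{1}{2}} v$, and substituting $U_S x = (\check{S}^{\frac{1}{2}}x,\, S\check{S}^{\frac{1}{2}}x)$ gives
\begin{equation*}
V_T^{*} U_S = \widehat{T}^{\frac{1}{2}} S\check{S}^{\frac{1}{2}} - T\check{T}^{\frac{1}{2}}\check{S}^{\frac{1}{2}},
\end{equation*}
whose norm is the first quantity in the theorem; the second is $\|V_S^{*} U_T\|$ by symmetry. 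I expect the main obstacle to be the bookkeeping around unbounded operators, specifically verifying that $R(\widehat{T}^{\frac{1}{2}}) = D(T^*)$ so $V_T$ actually lands in $G(T)^{\perp}$, and that $(T^*\widehat{T}^{\frac{1}{2}})^{*} = T\check{T}^{\frac{1}{2}}$ holds as an identity of bounded operators; both follow from the functional calculus of $T^*T$ and $TT^*$ but require care to state cleanly in the unbounded setting.
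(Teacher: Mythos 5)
The paper does not prove Theorem \ref{shkgrgapformula}; it is quoted without proof from the reference \cite{shkgrgap}, so there is no internal argument to compare against. Your proposal is a correct and essentially self-contained proof, and it follows the same route as the cited source: parametrize $G(T)$ and $G(T)^{\perp}$ by the isometries $U_T$ and $V_T$ built from $\check{T}^{\frac{1}{2}}$ and $\widehat{T}^{\frac{1}{2}}$, invoke the standard identity $\|P-Q\|=\max\{\|(I-P)Q\|,\|(I-Q)P\|\}$ for orthogonal projections, and collapse $\|(I-P_{G(T)})P_{G(S)}\|$ to $\|V_T^{*}U_S\|$. All the individual steps check out, including $(T^{*}\widehat{T}^{\frac{1}{2}})^{*}=T\check{T}^{\frac{1}{2}}$, which is exactly the identity $(F_T)^{*}=F_{T^{*}}$ recorded in the paper's proposition on the bounded transform. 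The only place needing the care you already flag is the isometry computation: to get $\|U_Tx\|^2=\|x\|^2$ you should avoid writing $\langle T^{*}T\check{T}^{\frac{1}{2}}x,\check{T}^{\frac{1}{2}}x\rangle$ (since $\check{T}^{\frac{1}{2}}x$ need not lie in $D(T^{*}T)$) and instead use $\|T\check{T}^{\frac{1}{2}}x\|^2=\langle F_T^{*}F_Tx,x\rangle=\langle (I-\check{T})x,x\rangle$, which is legitimate because $F_T^{*}F_T=I-Q_T^2=I-\check{T}$ holds as an identity of bounded operators.
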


  On $\mathcal B(H)$, the norm topology and the topology induced by the gap metric are the same. This can be seen from the following inequalities.

  \begin{theorem}\cite[Theorem 2.5]{nakamotogapformulas}\label{equalityoftopsnakamoto}
  Let $A,B\in \mathcal B(H)$. Then

  \begin{equation*}
  \theta(A,B)\leq \|A-B\|\leq \sqrt{1+\|A\|^2}\; \sqrt{1+\|B\|^2}\; \theta(A,B).
  \end{equation*}

  \end{theorem}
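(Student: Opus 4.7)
The plan is to work directly with the graph projection characterization of the gap, bypassing the more elaborate formula in Theorem~\ref{shkgrgapformula}. Since $A,B$ are everywhere defined and bounded, $G(A)=\{(x,Ax):x\in H\}$ and $G(B)=\{(x,Bx):x\in H\}$ are closed subspaces of $H\oplus H$, and $\theta(A,B)=\|P_{G(A)}-P_{G(B)}\|$. I will use the standard fact that for closed subspaces $M,N$ of a Hilbert space,
\[
\|P_M-P_N\|=\max\Bigl\{\sup_{\xi\in S_M}\operatorname{dist}(\xi,N),\ \sup_{\eta\in S_N}\operatorname{dist}(\eta,M)\Bigr\},
\]
which reduces each inequality to a one-sided distance estimate.

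For the first inequality $\theta(A,B)\le \|A-B\|$, pick any unit vector $\xi=(x,Ax)/\sqrt{1+\|Ax\|^2}\in S_{G(A)}$. The candidate $(x,Bx)\in G(B)$ gives
\[
\operatorname{dist}(\xi,G(B))\le \frac{\|(x,Ax)-(x,Bx)\|}{\sqrt{1+\|Ax\|^2}}=\frac{\|(A-B)x\|}{\sqrt{1+\|Ax\|^2}}\le \|A-B\|,
\]
and the symmetric estimate for $S_{G(B)}$ follows by swapping $A$ and $B$.

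The real content is the upper bound, which I expect to be the main obstacle since one needs to couple the two coordinates of the graph projection correctly. Fix $x\in S_H$, set $\xi=(x,Ax)\in G(A)$, and let $\eta=P_{G(B)}\xi=(u,Bu)$ for some $u\in H$. Then
\[
\|x-u\|^2+\|Ax-Bu\|^2=\|\xi-\eta\|^2\le \theta(A,B)^2\|\xi\|^2=\theta(A,B)^2(1+\|Ax\|^2).
\]
The key trick is to split $Ax-Bx=(Ax-Bu)+B(u-x)$ and apply Cauchy-Schwarz in $\mathbb{R}^2$ with weights $(1,\|B\|)$:
\[
\|Ax-Bx\|\le \|Ax-Bu\|+\|B\|\|u-x\|\le \sqrt{1+\|B\|^2}\,\sqrt{\|Ax-Bu\|^2+\|u-x\|^2}.
\]
Combining the two displays and using $\|Ax\|\le \|A\|$ yields
\[
\|Ax-Bx\|\le \sqrt{1+\|A\|^2}\,\sqrt{1+\|B\|^2}\,\theta(A,B),
\]
and taking the supremum over $x\in S_H$ gives the claim. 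The delicate step, and the one I would flag as the core of the argument, is choosing precisely the Cauchy-Schwarz weights $(1,\|B\|)$ so that the right-hand factor matches $\sqrt{1+\|Ax\|^2}$ arising from $\|\xi\|$; any cruder estimate (e.g.\ using $\|Ax-Bx\|\le (1+\|B\|)\max\{\|Ax-Bu\|,\|u-x\|\}$) produces a worse constant than the sharp $\sqrt{1+\|A\|^2}\sqrt{1+\|B\|^2}$ recorded in the theorem.
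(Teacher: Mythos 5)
Your argument is correct, but there is nothing in the paper to compare it against: Theorem~\ref{equalityoftopsnakamoto} is quoted from Nakamoto \cite{nakamotogapformulas} without proof, so you have supplied a self-contained argument where the authors rely on a citation. Your proof is sound. The reduction of $\theta(A,B)=\|P_{G(A)}-P_{G(B)}\|$ to the two one-sided quantities $\sup_{\xi\in S_M}\operatorname{dist}(\xi,N)$ is the classical identity $\|P_M-P_N\|=\max\{\|(I-P_N)P_M\|,\,\|(I-P_M)P_N\|\}$ (Kato, Ch.~I, or Akhiezer--Glazman), which is legitimate to invoke; the first inequality then follows exactly as you say. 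For the upper bound, the chain $\|x-u\|^2+\|Ax-Bu\|^2=\operatorname{dist}(\xi,G(B))^2\le\|(I-P_{G(B)})P_{G(A)}\|^2\|\xi\|^2\le\theta(A,B)^2(1+\|Ax\|^2)$ is valid, and the decomposition $Ax-Bx=(Ax-Bu)+B(u-x)$ together with the weighted Cauchy--Schwarz step $a+\|B\|b\le\sqrt{1+\|B\|^2}\sqrt{a^2+b^2}$ produces exactly the constant $\sqrt{1+\|A\|^2}\sqrt{1+\|B\|^2}$; you are right that this weighting is the one non-obvious move. Two minor remarks: the paper's (idiosyncratic) convention $G(T)=\{(Tx,x)\}$ rather than $\{(x,Tx)\}$ is harmless here, since swapping coordinates is unitary and changes none of the norms; and your argument never uses $H_1=H_2$, so it also justifies the remark following the theorem that the inequalities persist for operators between two different Hilbert spaces.
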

    We remark that though the above result is stated for operators defined on a Hilbert space, it remains true for operators defined between two different Hilbert spaces.

\begin{defn}\label{cgt}
Let $T\in \mathcal C(H_1,H_2)$. Define  the
\textit{Carrier Graph of $T$} by $$G_{C}(T):={\{(x,Tx):x\in
C(T)}\}\subseteq H_1\times H_2.$$
 For $S,T\in \mathcal C(H_1,H_2)$, the gap between $G_{C}(S)$ and $G_{C}(T)$ is denoted by,
  \begin{equation*}
  \eta(S,T)=\|P_{G_C(S)}-P_{G_C(T)}\|.
  \end{equation*}
  The  topology induced by the metric $\eta (\cdot,\cdot)$ on $\mathcal C(H_1,H_2)$ is called the \textit{Carrier Graph Topology}.

  To compute the $\eta(S,T)$ we can use the following formula;

  \begin{theorem}\label{equivalenceofcgtandgap}
Let $T,S\in \mathcal C(H_1,H_2)$. Then
$$|\eta(T,S)-\theta (N(T),N(S))|\leq \theta (T,S)\leq \eta(T,S)+\theta(N(T),N(S)).$$
\end{theorem}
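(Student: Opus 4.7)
The plan is to exploit the orthogonal decomposition $D(T)=N(T)\oplus^{\bot}C(T)$ recalled in the preliminaries, and lift it to the graph level. For any $x\in D(T)$, writing $x=n+c$ with $n\in N(T)$ and $c\in C(T)$ gives $Tx=Tc$, so $(x,Tx)=(n,0)+(c,Tc)$. The two summands are orthogonal in $H_{1}\times H_{2}$ because $c\perp n$ in $H_{1}$, and each summand is closed (the first obviously; the second because, as the authors note, $T_{C}$ is itself a densely defined closed operator from $N(T)^{\bot}$ into $H_{2}$, so $G_{C}(T)=G(T_{C})$ is closed). Hence we have the orthogonal decomposition
\begin{equation*}
G(T)=\bigl(N(T)\times\{0\}\bigr)\oplus^{\bot} G_{C}(T),
\end{equation*}
and consequently the projection identity
\begin{equation*}
P_{G(T)}=P_{N(T)\times\{0\}}+P_{G_{C}(T)}.
\end{equation*}

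Next I would observe that $P_{N(T)\times\{0\}}$ acts on $H_{1}\times H_{2}$ by $(x,y)\mapsto (P_{N(T)}x,0)$, which immediately yields
\begin{equation*}
\bigl\|P_{N(T)\times\{0\}}-P_{N(S)\times\{0\}}\bigr\|=\|P_{N(T)}-P_{N(S)}\|=\theta\bigl(N(T),N(S)\bigr).
\end{equation*}
Subtracting the projection identity for $T$ and $S$ gives
\begin{equation*}
P_{G(T)}-P_{G(S)}=\bigl(P_{N(T)\times\{0\}}-P_{N(S)\times\{0\}}\bigr)+\bigl(P_{G_{C}(T)}-P_{G_{C}(S)}\bigr).
\end{equation*}

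From this single identity both inequalities fall out by the triangle inequality. Taking norms directly delivers
\begin{equation*}
\theta(T,S)\leq \theta\bigl(N(T),N(S)\bigr)+\eta(T,S),
\end{equation*}
which is the right-hand inequality. Rearranging the same identity as $P_{G_{C}(T)}-P_{G_{C}(S)}=(P_{G(T)}-P_{G(S)})-(P_{N(T)\times\{0\}}-P_{N(S)\times\{0\}})$ and taking norms yields $\eta(T,S)-\theta(N(T),N(S))\leq \theta(T,S)$, and rearranging it the other way gives $\theta(N(T),N(S))-\eta(T,S)\leq \theta(T,S)$. Together these produce $|\eta(T,S)-\theta(N(T),N(S))|\leq \theta(T,S)$, which is the left-hand inequality.

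I expect the only real step requiring care to be justifying the orthogonal decomposition of $G(T)$; once that is in place the rest is a mechanical triangle-inequality argument on projections. Within that step, the subtle point is confirming that $G_{C}(T)$ is closed in $H_{1}\times H_{2}$ (not just in $N(T)^{\bot}\times H_{2}$), which follows from the statement $T_{C}\in\mathcal{C}(N(T)^{\bot},H_{2})$ noted earlier in the preliminaries together with the fact that $N(T)^{\bot}$ is closed in $H_{1}$.
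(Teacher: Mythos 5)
Your proof is correct. The paper itself states Theorem \ref{equivalenceofcgtandgap} without proof, deferring to the cited reference \cite{shkgrcgt}, and your argument --- the orthogonal decomposition $G(T)=\bigl(N(T)\times\{0\}\bigr)\oplus^{\bot}G_{C}(T)$, the resulting projection identity, and the triangle inequality applied three ways --- is precisely the standard route taken there, with the one delicate point (closedness of $G_{C}(T)$ in $H_{1}\times H_{2}$) correctly identified and justified via $T_{C}\in\mathcal{C}(N(T)^{\bot},H_{2})$.
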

\end{defn}

If $N(T)=N(S)$, by Theorem \ref{equivalenceofcgtandgap}, we can conclude that $\eta(S,T)=\theta(S,T)$.
For the details of this metric we refer to \cite{shkgrcgt}.


\section{Main Results}
In this section we define  reduced minimum modulus  attaining operators and discuss their properties. Recall that $T\in \mathcal C(H_1,H_2)$ is called minimum attaining if there exists $x_0\in S_{D(T)}$ such that
$\|Tx_0\|=m(T)$. In particular, if $T\in \mathcal B(H_1,H_2)$, then $T$ is minimum attaining if there exists $x_0\in S_{H_1}$ such that $\|Tx_0\|=m(T)$.

We denote the class of minimum attaining  densely defined closed operators between $H_1$ and $H_2$ by $\mathcal M_c(H_1,H_2)$ and $\mathcal M_c(H,H)$ by $\mathcal M_c(H)$. The class of bounded minimum attaining operators is denoted by $\mathcal M(H_1,H_2)$ and $\mathcal M(H, H)$ by $\mathcal M(H)$.

We propose the following definition;
\begin{defn}
We say $T\in \mathcal C(H_1,H_2)$ to be  {\it reduced  minimum attaining} if there exists $x_0\in S_{C(T)}$ such that $\|Tx_0\|=\gamma(T)$.
\end{defn}

 The class of reduced minimum attaining densely defined  closed linear operators between $H_1$ and $H_2$ is denoted  by $\Gamma_c(H_1,H_2)$. If $H_1=H_2=H$, then we write $\Gamma_c(H_1,H_2)$ by $\Gamma_c(H)$.
 The class of bounded operators which attain the reduced minimum is denoted by $\Gamma(H_1,H_2)$ and $\Gamma(H,H)$ is denoted by $\Gamma(H)$.
\begin{theorem} \label{dualrelation}
Let $T\in \mathcal C(H_1,H_2)$. Then $T$ attains its reduced minimum if and
only if $T^{\dagger}$ is bounded and attains its norm.
\end{theorem}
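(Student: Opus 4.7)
The plan is to use the Moore–Penrose identities from Theorem~\ref{propertiesofgeninve1} together with the formula $\|T^\dagger\|=1/\gamma(T)$ from Proposition~\ref{equivalentclosedrange}, and to pass between a unit vector $x_0\in C(T)$ at which $\|Tx_0\|=\gamma(T)$ and a unit vector $y_0\in H_2$ at which $\|T^\dagger y_0\|=\|T^\dagger\|$ by the explicit correspondence $y_0=Tx_0/\gamma(T)$ (forward) and $x_0 = T^\dagger y_0 /\|T^\dagger y_0\|$ (backward).

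For the forward direction, suppose $x_0\in C(T)$, $\|x_0\|=1$, $\|Tx_0\|=\gamma(T)$. I first check $\gamma(T)>0$: otherwise $Tx_0=0$ would put $x_0\in N(T)\cap N(T)^\perp=\{0\}$, contradicting $\|x_0\|=1$. Hence by Proposition~\ref{equivalentclosedrange} the range $R(T)$ is closed, $T^\dagger$ is bounded with $\|T^\dagger\|=1/\gamma(T)$, and $D(T^\dagger)=H_2$. Set $y_0:=Tx_0/\gamma(T)$, so $\|y_0\|=1$. Using $T^\dagger T = P_{N(T)^\perp}$ on $D(T)$ and the fact that $x_0\in C(T)\subseteq N(T)^\perp$, I compute
\[
T^\dagger y_0 \;=\; \tfrac{1}{\gamma(T)}\,T^\dagger T x_0 \;=\; \tfrac{1}{\gamma(T)}\,P_{N(T)^\perp}x_0 \;=\; \tfrac{1}{\gamma(T)}\,x_0,
\]
so $\|T^\dagger y_0\|=1/\gamma(T)=\|T^\dagger\|$, which shows $T^\dagger$ is norm attaining.

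For the reverse direction, assume $T^\dagger$ is bounded (so $R(T)$ is closed and $\gamma(T)=1/\|T^\dagger\|>0$) and that there is $y_0\in H_2$ with $\|y_0\|=1$ and $\|T^\dagger y_0\|=\|T^\dagger\|$. The key reduction is to show $y_0\in R(T)$: decompose $y_0=y_1+y_2$ along $H_2=R(T)\oplus^\perp R(T)^\perp=R(T)\oplus^\perp N(T^\dagger)$; then $T^\dagger y_0=T^\dagger y_1$, and
\[
\|T^\dagger\|^2 \;=\; \|T^\dagger y_1\|^2 \;\le\; \|T^\dagger\|^2\,\|y_1\|^2 \;\le\; \|T^\dagger\|^2,
\]
forcing $\|y_1\|=1$ and hence $y_2=0$. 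Now put $x_0:=T^\dagger y_0$. By Theorem~\ref{propertiesofgeninve1}, $x_0\in R(T^\dagger)=C(T)$, and since $y_0\in R(T)=\overline{R(T)}$ we get $Tx_0=TT^\dagger y_0=P_{\overline{R(T)}}y_0=y_0$. Thus $\|x_0\|=\|T^\dagger\|=1/\gamma(T)$ and $\|Tx_0\|=1$. Normalising $x_1:=x_0/\|x_0\|\in S_{C(T)}$ yields $\|Tx_1\|=1/\|x_0\|=\gamma(T)$, so $T$ attains its reduced minimum at $x_1$.

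The only real subtlety is the forward check that $\gamma(T)>0$ (so that $T^\dagger$ is actually bounded and the norm statement makes sense), and, in the reverse direction, noticing that the maximiser $y_0$ must lie in $R(T)$ — without this, $Tx_0=y_0$ could fail and the identity $TT^\dagger = P_{\overline{R(T)}}$ would not give us back the vector we need. Once these two points are in place, the two Moore–Penrose identities $T^\dagger T=P_{N(T)^\perp}$ and $TT^\dagger=P_{\overline{R(T)}}$ do the bookkeeping mechanically.
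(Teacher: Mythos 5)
Your proposal is correct and follows essentially the same route as the paper: the same normalisation $y_0=Tx_0/\gamma(T)$ in the forward direction (with the same check that $\gamma(T)>0$), and the same orthogonal decomposition of the maximiser $y_0$ along $R(T)\oplus^\perp R(T)^\perp$ to force $y_0\in R(T)$ in the reverse direction, your squeeze $\|T^\dagger\|^2\le\|T^\dagger\|^2\|y_1\|^2$ being just a rephrasing of the paper's strict-inequality contradiction. The only cosmetic difference is that you define $x_0:=T^\dagger y_0$ directly and verify $Tx_0=y_0$ via $TT^\dagger=P_{\overline{R(T)}}$, whereas the paper first picks a preimage of $y_0$ and then identifies it with $T^\dagger y_0$; these are equivalent.
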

\begin{proof}
Suppose $T$ attains its reduced minimum. Then there exists $x_0 \in C(T) $ such that   $\|x_0\| = 1$  and $\|T(x_0)\| =   \gamma(T)$.
We must have $\gamma(T) > 0$ as otherwise $x_0 \in N(T) $ will imply $x_0 = 0$, a contradiction. This implies that $T^{\dagger}$ is bounded.
Let $y_0 = T(x_0)/\|T(x_0)\| = T(x_0)/\gamma(T)$. Then   $\| y_0\| = 1$ and
\begin{equation*}
\| T^{\dagger}(y_0)\| = \|T^{\dagger}T(x_0)\|/\gamma(T)
= \|x_0\|/\gamma(T) = 1/\gamma(T) = \|T^{\dagger}\|.
\end{equation*}
Thus $T^{\dagger}$ attains its norm.

Conversely assume that  $T^{\dagger}$ is bounded and attains its norm. Then there exists $y_0 \in H_2$ such that $\|y_0\| = 1$
and $\| T^{\dagger}(y_0)\| = \|T^{\dagger}\|$. Let $y_0 = u + v$ where $u \in R(T)$ and $v \in R(T)^{\perp}$. Suppose $v \neq 0$.
Then $\|u\| < 1$. Hence
\begin{equation*}
\| T^{\dagger}\| = \| T^{\dagger}(y_0)\| = \| T^{\dagger}(u)\|  \leq  \|T^{\dagger}\| \|u\| < \| T^{\dagger}\|,
\end{equation*}
a contradiction. This implies that $v = 0$, hence $ y_0 \in R(T)$. Thus there exists $x_0 \in C(T)$ such that $y_0 = T(x_0)$. Then
$x _0 = T^{\dagger}(y_0)$, hence
$\|x _0\| = \| T^{\dagger} \|$.  Let
$z_0 = x_0/ \| x_0\|$. Then $z_0 \in C(T)$,  $\|z_0\| = 1$ and
\begin{equation*}
\|T(z_0)\| = \|y_0\| / \|x_0\| = 1/ \|T^{\dagger}\| = \gamma(T) .
\end{equation*}
Thus $T$ attains the reduced minimum modulus.
\end{proof}

\begin{cor}
Let $T\in \mathcal C(H_1,H_2)$. Suppose $T$ is one-to-one. Then the following are equivalent.
\begin{enumerate}
\item  \label{minattaining}  $T\in \mathcal M_c(H_1,H_2)$
\item  \label{rmattaining}$T\in \Gamma_c(H_1,H_2)$
\item \label{mpibdd}$T^{\dagger} \in \mathcal B(H_2,H_1)$ and attains its norm.
\end{enumerate}
\end{cor}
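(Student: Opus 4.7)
The plan is to leverage the hypothesis of injectivity to collapse the notions of minimum attaining and reduced-minimum attaining, and then to invoke the previous theorem.

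First, I would observe that since $T$ is one-to-one, $N(T) = \{0\}$, hence $N(T)^\perp = H_1$ and consequently
\[
C(T) = D(T) \cap N(T)^\perp = D(T).
\]
In particular, $S_{C(T)} = S_{D(T)}$. By the remark following Definition \ref{minmmodulus}, this also gives $m(T) = \gamma(T)$. Therefore the defining conditions of $\mathcal M_c(H_1,H_2)$ and $\Gamma_c(H_1,H_2)$ coincide in this setting: a unit vector $x_0 \in D(T)$ satisfies $\|Tx_0\| = m(T)$ if and only if it satisfies $\|Tx_0\| = \gamma(T)$ with $x_0 \in C(T)$. This yields (\ref{minattaining}) $\Longleftrightarrow$ (\ref{rmattaining}) with essentially no work.

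Next, the equivalence (\ref{rmattaining}) $\Longleftrightarrow$ (\ref{mpibdd}) is a direct application of Theorem \ref{dualrelation}, which asserts precisely that $T$ attains its reduced minimum modulus if and only if $T^\dagger$ is bounded and norm attaining (no injectivity needed for this part).

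The argument is essentially a bookkeeping exercise, so there is no real obstacle to overcome; the only point to check carefully is that injectivity really does identify $C(T)$ with the full domain $D(T)$, which is immediate from the orthogonal decomposition $D(T) = N(T) \oplus^\perp C(T)$ recalled in the Preliminaries. Together the two equivalences close the circle (\ref{minattaining}) $\Longleftrightarrow$ (\ref{rmattaining}) $\Longleftrightarrow$ (\ref{mpibdd}), completing the proof.
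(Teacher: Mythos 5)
Your proposal is correct and follows essentially the same route as the paper: injectivity gives $C(T)=D(T)$ and $m(T)=\gamma(T)$, which identifies the two attainment notions, and the equivalence with the Moore--Penrose inverse being bounded and norm attaining is a direct citation of the preceding theorem. No gaps.
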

\begin{proof}
Since $T$ is injective, $N(T)={\{0}\}$. Hence $C(T)=D(T)$ and $\gamma(T)=m(T)$. This shows equivalence of (\ref{minattaining}) and (\ref{rmattaining}). Equivalence of  (\ref{rmattaining}) and (\ref{mpibdd}) follows  from Theorem \ref{dualrelation}.
\end{proof}
\begin{lemma}\label{mmandrmm}
 Let $T\in \mathcal C(H_1,H_2)$. Then $T\in \Gamma_c(H_1,H_2)$ if and only if $T_C\in \mathcal M_c(N(T)^{\bot},H_2)$.
\end{lemma}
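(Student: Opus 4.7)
The plan is to reduce the claim to a straightforward unpacking of definitions, exploiting the fact that $T_C$, viewed as an operator from $N(T)^\perp$ to $H_2$, is automatically injective.

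First I would verify that $T_C$ is a bona fide object of $\mathcal C(N(T)^\perp,H_2)$ with $D(T_C)=C(T)$, which the paper has already noted using $\overline{C(T)}=N(T)^\perp$. The next step is to observe that $T_C$ is injective: if $x\in C(T)$ with $T_C x=Tx=0$, then $x\in N(T)\cap N(T)^\perp=\{0\}$. Consequently $N(T_C)=\{0\}$ and
\begin{equation*}
C(T_C)=D(T_C)\cap N(T_C)^\perp=D(T_C)=C(T).
\end{equation*}

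Given these identifications, I would then compute both quantities directly from the definitions. Because $D(T_C)=C(T_C)=C(T)$ and $T_C x=Tx$ for $x\in C(T)$,
\begin{equation*}
m(T_C)=\inf\{\|T_Cx\|:x\in S_{D(T_C)}\}=\inf\{\|Tx\|:x\in S_{C(T)}\}=\gamma(T),
\end{equation*}
and similarly $\gamma(T_C)=\gamma(T)$ (consistent with the remark that $m=\gamma$ for injective operators). Thus the existence of an $x_0\in S_{C(T)}$ with $\|Tx_0\|=\gamma(T)$ is literally the same statement as the existence of $x_0\in S_{D(T_C)}$ with $\|T_Cx_0\|=m(T_C)$, which closes both directions of the equivalence simultaneously.

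I do not anticipate a substantive obstacle; the only subtlety is keeping careful track of which Hilbert space plays the role of the ``ambient'' space, since the orthogonal complement $N(T_C)^\perp$ must be computed inside $N(T)^\perp$ rather than inside $H_1$. Once the ambient space is fixed as $N(T)^\perp$, the computation $N(T_C)^\perp=N(T)^\perp$ becomes trivial (the whole space) and everything else is a direct translation of definitions.
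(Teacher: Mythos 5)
Your proposal is correct and follows the same route as the paper, whose entire proof is the one-line observation that $m(T_C)=\gamma(T)$; you have simply spelled out the identifications $D(T_C)=C(T_C)=C(T)$ and the injectivity of $T_C$ that make this identity and the resulting equivalence of the two attainment statements immediate.
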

\begin{proof}
The proof follows from the fact that $m(T_C)=\gamma(T)$.
\end{proof}

\begin{prop}\label{closedrange}
Let $T\in \Gamma_c(H_1,H_2)$. Then $R(T)$ is closed.
\end{prop}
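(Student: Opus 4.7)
The plan is very short because the proposition is essentially a direct consequence of the definitions together with Proposition \ref{equivalentclosedrange}. By that proposition, $R(T)$ is closed if and only if $\gamma(T) > 0$, so it suffices to show $\gamma(T) > 0$ whenever $T \in \Gamma_c(H_1, H_2)$.

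To do this, I would argue by contradiction. Suppose $T$ attains its reduced minimum at some $x_0 \in C(T)$ with $\|x_0\| = 1$, but $\gamma(T) = 0$. Then $\|T x_0\| = \gamma(T) = 0$, so $T x_0 = 0$, i.e.\ $x_0 \in N(T)$. On the other hand $x_0 \in C(T) = D(T) \cap N(T)^{\perp}$, so $x_0 \in N(T) \cap N(T)^{\perp} = \{0\}$, contradicting $\|x_0\| = 1$. Hence $\gamma(T) > 0$, and the conclusion follows from Proposition \ref{equivalentclosedrange}.

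An equivalent and equally quick route is to invoke Theorem \ref{dualrelation}: since $T$ attains its reduced minimum, $T^{\dagger}$ is bounded, and by item (\ref{minmodreciprocal}) of Proposition \ref{equivalentclosedrange} this already forces $R(T)$ to be closed. There is no real obstacle here; the only thing to be careful about is observing that the definition of $\Gamma_c(H_1,H_2)$ requires the witness $x_0$ to lie in $C(T)$ (not merely in $D(T)$), which is precisely what rules out the degenerate case $\gamma(T) = 0$.
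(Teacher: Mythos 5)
Your proof is correct, and your second route (Theorem \ref{dualrelation} plus item (\ref{minmodreciprocal}) of Proposition \ref{equivalentclosedrange}) is exactly the paper's one-line proof. Your primary argument — showing directly that $\gamma(T)>0$ via $N(T)\cap N(T)^{\perp}=\{0\}$ and then using the equivalence $\gamma(T)>0\iff R(T)$ closed — is a minor variant that merely unpacks the same observation already made inside the paper's proof of Theorem \ref{dualrelation}.
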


\begin{proof}
 This follows from Theorem \ref{dualrelation} and Proposition \ref{equivalentclosedrange}.
\end{proof}
\begin{eg}
\begin{enumerate}
\item All orthogonal projections on a Hilbert space attain their reduced minimum
\item An operator with non closed range cannot attain its reduced minimum.
 \end{enumerate}
\end{eg}

\begin{prop}
Let $T\in \mathcal C(H_1,H_2)$. Then  $T\in \Gamma_c(H_1,H_2)$ if and only if  $|T|\in \Gamma_c(H_1)$.
\end{prop}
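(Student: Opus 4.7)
The plan is to exploit the identity $\|Tx\| = \||T|x\|$ for $x \in D(T)$, together with the already-recorded facts that $D(|T|) = D(T)$, $N(|T|) = N(T)$, and $\gamma(T) = \gamma(|T|)$, all stated in the remark following the polar decomposition theorem. These are exactly the ingredients that make reduced minimum attainment transfer between $T$ and $|T|$ at the level of individual unit vectors.

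First I would observe that the carrier spaces coincide. Since $N(|T|) = N(T)$, we have $N(|T|)^{\perp} = N(T)^{\perp}$, and combined with $D(|T|) = D(T)$ this yields
\begin{equation*}
C(|T|) = D(|T|) \cap N(|T|)^{\perp} = D(T) \cap N(T)^{\perp} = C(T).
\end{equation*}
In particular, the unit spheres $S_{C(T)}$ and $S_{C(|T|)}$ are the same set.

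Next, for any $x \in C(T) = C(|T|)$ with $\|x\| = 1$, the identity $\|Tx\| = \||T|x\|$ together with $\gamma(T) = \gamma(|T|)$ gives
\begin{equation*}
\|Tx\| = \gamma(T) \iff \||T|x\| = \gamma(|T|).
\end{equation*}
Hence $T$ attains its reduced minimum at $x_0$ if and only if $|T|$ attains its reduced minimum at the same $x_0$, which proves the equivalence $T \in \Gamma_c(H_1,H_2) \iff |T| \in \Gamma_c(H_1)$.

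There is no real obstacle here; the statement is essentially a bookkeeping consequence of the polar-decomposition remark already quoted in Section 2. The only point worth stating explicitly is the identification $C(T) = C(|T|)$, after which the equivalence is immediate.
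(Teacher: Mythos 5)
Your proposal is correct and follows essentially the same route as the paper's own proof: identify $C(|T|)=C(T)$ from $D(|T|)=D(T)$ and $N(|T|)=N(T)$, then transfer attainment pointwise via $\|Tx\|=\||T|x\|$ and $\gamma(T)=\gamma(|T|)$. No issues.
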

\begin{proof}
By definition $D(|T|)=D(T)$ and  $N(|T|)=N(T)$. Hence $C(|T|)=C(T)$. Also, $\|Tx\|=\||T|x\|$ for all $x\in D(T)$. Thus $T\in \Gamma(H_1,H_2)$ if and only if $|T|\in \Gamma(H_1)$.
\end{proof}

\begin{prop}\label{rmmselfadjoint}\cite[Proposition 4.2]{knr}
Let $T=T^*\in \mathcal C(H)$. Then $\gamma(T)=d(0,\sigma(T)\setminus {\{0}\})$.
\end{prop}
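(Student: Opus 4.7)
The plan is to restrict $T$ to its carrier and then apply the standard relation $\|S^{-1}\|=1/d(0,\sigma(S))$ for a bounded invertible self-adjoint operator $S$. Set $K=N(T)^\perp$ and $T_C=T|_{C(T)}$. Because $T=T^*$, the subspace $N(T)$ reduces $T$, and $T$ decomposes as $0_{N(T)}\oplus T_C$ on $N(T)\oplus K$; consequently $T_C$ is densely defined, closed, self-adjoint, and injective as an operator on the Hilbert space $K$. Lemma \ref{mmandrmm} gives $\gamma(T)=m(T_C)$. The same direct-sum decomposition also yields $\sigma(T)\setminus\{0\}=\sigma(T_C)\setminus\{0\}$: for any $\lambda\neq 0$, $T-\lambda I = (-\lambda I_{N(T)})\oplus(T_C-\lambda I_K)$, and since $-\lambda I_{N(T)}$ is always invertible, $\lambda\in\rho(T)$ iff $\lambda\in\rho(T_C)$. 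Thus it suffices to prove $m(T_C)=d(0,\sigma(T_C)\setminus\{0\})$.

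I then split on $m(T_C)$. Suppose first $m(T_C)>0$. Proposition \ref{equivalentclosedrange} gives that $R(T_C)$ is closed in $K$; combined with $R(T_C)^\perp=N(T_C^*)=N(T_C)=\{0\}$ this makes $T_C$ bijective with $T_C^{-1}\in\mathcal B(K)$, so $0\in\rho(T_C)$ and $d(0,\sigma(T_C)\setminus\{0\})=d(0,\sigma(T_C))$. Remark \ref{reciprocalinversenorm} gives $\|T_C^{-1}\|=1/m(T_C)$, and since $T_C^{-1}$ is bounded self-adjoint its norm equals its spectral radius; the spectral mapping theorem applied to $\lambda\mapsto 1/\lambda$ then yields
\[
\frac{1}{m(T_C)}=\|T_C^{-1}\|=\sup\{|\mu|:\mu\in\sigma(T_C^{-1})\}=\frac{1}{\inf\{|\lambda|:\lambda\in\sigma(T_C)\}}=\frac{1}{d(0,\sigma(T_C))}.
\]
Combining the displayed equalities with the preceding reductions gives $\gamma(T)=m(T_C)=d(0,\sigma(T)\setminus\{0\})$.

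Suppose instead $m(T_C)=0$, so $\gamma(T)=0$, and I must show $d(0,\sigma(T)\setminus\{0\})=0$. Since $T_C$ fails to be bounded below, $0\in\sigma(T_C)$; since $T_C$ is injective, $0\notin\sigma_p(T_C)$. The standard fact that for a self-adjoint operator every isolated point of the spectrum is an eigenvalue (otherwise the spectral projection on $\{0\}$ would be a nonzero projection onto $N(T_C)$) then forces $0$ to be a non-isolated point of $\sigma(T_C)$, so there exist nonzero $\lambda_n\in\sigma(T_C)\setminus\{0\}=\sigma(T)\setminus\{0\}$ with $\lambda_n\to 0$, giving $d(0,\sigma(T)\setminus\{0\})=0$, as required. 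The principal technical input throughout is the first-paragraph reduction: that a densely defined closed self-adjoint $T$ is reduced by $N(T)$ and that the part $T_C$ on $K$ is itself self-adjoint. Once this is granted, the remainder is routine use of the closed-range/generalized-inverse equivalences from Section 2 together with the bounded self-adjoint functional calculus.
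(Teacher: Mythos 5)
Your proof is correct. Note that the paper itself offers no argument for this proposition --- it is imported verbatim from \cite[Proposition 4.2]{knr} --- so there is no internal proof to compare against; judged on its own, your argument is complete and uses only facts available in Section 2 plus standard spectral theory. The structural step you rightly single out is the one that carries the whole proof: since $T=T^*$ we have $R(T)\subseteq\overline{R(T)}=N(T^*)^{\perp}=N(T)^{\perp}$, so $N(T)$ reduces $T$ and $T=0_{N(T)}\oplus T_C$; self-adjointness of $T_C$ on $K=N(T)^{\perp}$ then follows because any $y\in D(T_C^*)$ lies in $D(T^*)\cap K=D(T)\cap N(T)^{\perp}=C(T)$. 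Granting that, your identification $\sigma(T)\setminus\{0\}=\sigma(T_C)\setminus\{0\}$ is immediate from the direct sum, and the two cases are handled correctly: for $m(T_C)>0$ you need both closedness of $R(T_C)$ and $R(T_C)^{\perp}=N(T_C^*)=N(T_C)=\{0\}$ to get $0\in\rho(T_C)$, which you state, after which Remark \ref{reciprocalinversenorm} and the equality of norm and spectral radius for the bounded self-adjoint $T_C^{-1}$ finish the computation; for $m(T_C)=0$ the observation that an isolated point of the spectrum of a self-adjoint operator is an eigenvalue, combined with injectivity of $T_C$, correctly forces $0$ to be an accumulation point of $\sigma(T_C)\setminus\{0\}$. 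The only degenerate situation, $T=0$ (where $\sigma(T)\setminus\{0\}=\emptyset$ and both sides are interpreted as an infimum over the empty set), is an artifact of the statement itself rather than a gap in your argument.
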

\begin{lemma}\label{minmodselfadj}
\begin{enumerate}
\item \label{minmodformula} Let $T\in \mathcal C(H)$ be  self-adjoint. Then  $m(T)=d(0,\sigma(T))$
\item \label{minmodspectralpt} If $T\in \mathcal C(H_1,H_2)$, then $m(T)\in \sigma(|T|)$. In particular, if $H_1=H_2=H$ and  $T\geq 0$, then $m(T)\in \sigma(T)$.
\end{enumerate}
\end{lemma}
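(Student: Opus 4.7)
The plan is to split the lemma into its two parts, proving (1) directly from the already-available Proposition \ref{rmmselfadjoint} (which gives $\gamma(T)=d(0,\sigma(T)\setminus\{0\})$) by a short case analysis on the kernel of $T$, and then deducing (2) by passing to the modulus $|T|$, which is self-adjoint.

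For (1), suppose $T\in\mathcal C(H)$ is self-adjoint. I would distinguish whether $T$ is injective. If $N(T)\neq\{0\}$, then $0\in\sigma_p(T)\subseteq\sigma(T)$ and also $m(T)=0$, so both sides vanish. If $T$ is injective, then by the remark $m(T)\le\gamma(T)$ (with equality when $T$ is one-to-one), we have $m(T)=\gamma(T)$. Now if $0\notin\sigma(T)$, then $\sigma(T)=\sigma(T)\setminus\{0\}$ and Proposition \ref{rmmselfadjoint} yields the equality directly. If instead $0\in\sigma(T)$, I would argue that $R(T)$ cannot be closed: since $T=T^*$ and $N(T)=\{0\}$, we have $R(T)^\perp=N(T^*)=\{0\}$, so $\overline{R(T)}=H$; closedness of $R(T)$ would then make $T$ bijective on $H$, and the closed graph theorem would put $0\in\rho(T)$, contradicting $0\in\sigma(T)$. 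Hence $R(T)$ is not closed, and Proposition \ref{equivalentclosedrange} gives $\gamma(T)=0$. Since also $d(0,\sigma(T))=0$, equality still holds.

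For (2), I would use the identity $m(T)=m(|T|)$, which the paper records from $\|Tx\|=\||T|x\|$. Since $|T|$ is positive (in particular self-adjoint), part (1) applied to $|T|$ gives $m(T)=m(|T|)=d(0,\sigma(|T|))$. Because $|T|\ge 0$, its spectrum is a closed, non-empty subset of $[0,\infty)$, so $d(0,\sigma(|T|))=\min\sigma(|T|)$ is actually attained inside $\sigma(|T|)$. Thus $m(T)\in\sigma(|T|)$. For the ``in particular'' assertion, if $T\ge 0$ then $|T|=(T^*T)^{1/2}=(T^2)^{1/2}=T$ by uniqueness of the positive square root (Theorem \ref{squareroot}), so $m(T)\in\sigma(T)$.

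The only delicate point I anticipate is the injective subcase with $0\in\sigma(T)$ in part (1): the argument genuinely needs both self-adjointness (to force $R(T)$ dense) and the closed graph theorem (to rule out closed range). Everything else reduces to a trivial case split or a direct invocation of results already quoted in Section 2.
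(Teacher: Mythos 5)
Your proposal is correct and follows essentially the same route as the paper: the nontrivial case is handled by Proposition \ref{rmmselfadjoint} together with $m(T)=\gamma(T)$ for injective $T$, the degenerate case $0\in\sigma(T)$ is disposed of by showing $m(T)=0$, and part (2) is deduced by applying part (1) to $|T|$ and using closedness of $\sigma(|T|)$. The only difference is that you spell out why a self-adjoint operator with $0\in\sigma(T)$ cannot be bounded below (dense range plus the closed graph theorem), a step the paper asserts without elaboration.
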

\begin{proof}
Proof of  (\ref{minmodformula}): If $T$ is not invertible, then $0\in \sigma(T)$ and $T$ is not bounded below. Hence in this case $m(T)=0=d(0,\sigma(T))$.

           Next assume that $0\notin \sigma(T)$.  Since $\sigma(T)$ is closed (\cite[Proposition 2.6, Page 29]{schmudgen}), we can conclude that $d(0,\sigma(T))>0$. Also, as $T^{-1}\in \mathcal B(H)$, $T$ must be bounded below. Hence $m(T)>0$. In this case, $m(T)=\gamma(T)$. Now, by Proposition \ref{rmmselfadjoint},  we have $m(T)=\gamma(T)=d(0,\sigma(T)\setminus {\{0}\})=d(0,\sigma(T))$.

Proof of (\ref{minmodspectralpt}): Note that $|T|\geq 0$ and by (\ref{minmodformula}), we have that $m(T)=m(|T|)=d(0,\sigma(|T|))$. Since, $\sigma(|T|)$ is closed, we can conclude that $m(T)\in \sigma(|T|)$.
 If $H_1=H_2=H$ and $T\geq 0$, then we have $|T|=T$. Hence in this case the result follows.
\end{proof}

\begin{rem}
Let $T\in \mathcal C(H)$ be normal. Then we can prove the formula $m(T)=d(0,\sigma(T))$. First note that the crucial point in proving this in the self-adjoint case is Proposition \ref{rmmselfadjoint}. This is proved for normal operators
in \cite[Theorem 4.4.5]{GRthesis}. Now following along the similar lines of Proposition \ref{minmodselfadj}, we can obtain the formula.
\end{rem}

\begin{prop}
Let $T=T^*\in \mathcal C(H)$. Then $T\in \Gamma_c(H)$ if and only if either $\gamma(T)$ or $-\gamma(T)$ is an eigenvalue of $T$. In particular, if $T\geq 0$, then $T\in \Gamma_c(H)$ if and only if $\gamma(T)$ is an eigenvalue of $T$.
\end{prop}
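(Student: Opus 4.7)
The plan is to apply the spectral theorem for the (possibly unbounded) self-adjoint operator $T$, in tandem with Proposition \ref{rmmselfadjoint}, which identifies $\gamma(T)$ with $d(0, \sigma(T) \setminus \{0\})$.

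For the forward implication, suppose $T \in \Gamma_c(H)$. Proposition \ref{closedrange} gives that $R(T)$ is closed, so $\gamma := \gamma(T) > 0$. Pick $x_0 \in S_{C(T)}$ with $\|Tx_0\| = \gamma$, let $E$ be the projection-valued spectral measure of $T$, and form the scalar probability measure $\mu(\Delta) := \langle E(\Delta) x_0, x_0 \rangle$ on $\sigma(T)$. Since $x_0 \perp N(T) = E(\{0\}) H$, we get $\mu(\{0\}) = 0$, and by Proposition \ref{rmmselfadjoint} the support of $\mu$ lies inside $\{\lambda : |\lambda| \geq \gamma\}$. The spectral identity
\begin{equation*}
\gamma^2 \;=\; \|T x_0\|^2 \;=\; \int \lambda^2 \, d\mu(\lambda) \;\geq\; \gamma^2 \int d\mu \;=\; \gamma^2
\end{equation*}
is therefore an equality, forcing $\lambda^2 = \gamma^2$ $\mu$-a.e., i.e.\ $\mu$ is concentrated on $\{\gamma, -\gamma\}$. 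Writing $x_0 = E(\{\gamma\}) x_0 + E(\{-\gamma\}) x_0$, at least one summand is nonzero and supplies an eigenvector for $\gamma$ or $-\gamma$.

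For the converse, assume $T x_0 = \pm \gamma x_0$ with $\|x_0\| = 1$. Self-adjointness of $T$ makes eigenspaces for distinct eigenvalues orthogonal, and $\pm \gamma \neq 0$ places $x_0$ in $N(T)^\perp$; thus $x_0 \in D(T) \cap N(T)^\perp = C(T)$ and $\|T x_0\| = \gamma = \gamma(T)$, so $T \in \Gamma_c(H)$. The ``in particular'' addendum is then immediate: for $T \geq 0$ one has $\sigma(T) \subseteq [0, \infty)$, which rules out $-\gamma(T)$ as an eigenvalue and leaves $\gamma(T)$ as the only option. (The degenerate case $\gamma(T) = 0$ is consistent with the biconditional, since then $R(T)$ is not closed, so $T \notin \Gamma_c(H)$ and the converse direction is vacuous once one insists that $\pm\gamma(T)$ be a genuine eigenvalue with eigenvector in $C(T)$.)

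The main obstacle I anticipate is the measure-theoretic step: explicitly arguing that $x_0 \perp N(T)$ translates to $\mu(\{0\}) = 0$ and that the concentration $\mu(\{\gamma, -\gamma\}) = 1$ yields $x_0 \in E(\{\gamma, -\gamma\}) H$, in the unbounded spectral calculus. These are routine for bounded self-adjoint operators, but in the unbounded setting they require invoking the correct spectral-theorem formulation. An alternative route, avoiding the spectral measure of $T$ itself, would be to apply the earlier equivalence $T \in \Gamma_c(H) \Leftrightarrow |T| \in \Gamma_c(H)$, derive $|T| x_0 = \gamma x_0$ (so $T^2 x_0 = \gamma^2 x_0$), and then exploit the factorization $(T - \gamma)(T + \gamma) x_0 = 0$: if $(T+\gamma) x_0 = 0$ then $-\gamma$ is an eigenvalue, otherwise $y := (T + \gamma) x_0 \neq 0$ satisfies $T y = \gamma y$.
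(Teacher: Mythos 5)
Your argument is correct, but it takes a genuinely different route from the paper. The paper's proof is a two-line reduction: by Lemma \ref{mmandrmm}, $T\in \Gamma_c(H)$ iff $T_C=T|_{C(T)}\in \mathcal M(N(T)^{\bot})$; since $N(T)^{\perp}$ reduces $T$, the restriction $T_C$ is again self-adjoint, and the paper then invokes the already-established characterization of self-adjoint minimum-attaining operators ($S\in\mathcal M$ iff $\pm m(S)\in\sigma_p(S)$, the self-adjoint companion of the positive case quoted from \cite{shkgr7}), together with $m(T_C)=\gamma(T)$. You instead give a self-contained spectral-measure argument: the equality case of $\int \lambda^2\,d\mu \geq \gamma^2\int d\mu$ forces $\mu$ to concentrate on $\{\pm\gamma\}$, producing the eigenvector directly. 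What your approach buys is independence from the external minimum-attaining result (and it would in fact reprove that result); what the paper's approach buys is brevity and reuse of machinery already set up, which is the stated strategy of the whole section. Your converse direction matches the paper's in substance. One remark: your parenthetical on the degenerate case $\gamma(T)=0$ is actually more careful than the paper. As literally stated, the biconditional has an edge case there (a self-adjoint operator with nontrivial kernel and non-closed range has $\gamma(T)=0\in\sigma_p(T)$ yet is not in $\Gamma_c(H)$); the paper's proof quietly avoids this by working with eigenvalues of $T_C$ rather than of $T$, and your reading --- that the eigenvector should be demanded in $C(T)$ --- is the right repair, though it does amount to amending the statement rather than proving it verbatim.
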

\begin{proof}
We have by Lemma \ref{mmandrmm},  that $T\in \Gamma_c(H)$ if and only if $T_C\in \mathcal M(N(T)^{\bot})$. As $N(T)^{\bot}$ is a reducing subspace for $T$, $T_C$ is self-adjoint. Now, $T_C\in \mathcal M(N(T)^{\bot})$ if and only
either $m(T_C)$ or $-m(T_C)$ is an eigenvalue for $T_C$ and hence for $T$. Since $m(T_C)=\gamma(T)$, the conclusion follows. In particular, if $T\geq 0$, the eigenvalues of $T$ are positive, so we can conclude that $T\in \Gamma_c(H)$ if and only if
$\gamma(T)\in \sigma_p(T)$.
\end{proof}

\begin{prop}\label{redmingram}
Let $T\in \mathcal C(H_1,H_2)$. Then $T\in \Gamma_c(H_1,H_2)$ if and only if $T^*T\in \Gamma_c(H_1)$.
\end{prop}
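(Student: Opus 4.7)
The plan is to reduce the statement to a question about norm attainment of a bounded operator and its Moore--Penrose inverse, and then to invoke the well-known equivalence ``$S$ attains its norm $\iff SS^{*}$ attains its norm'' for bounded operators on Hilbert spaces.

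First I would apply Theorem \ref{dualrelation} twice: $T\in\Gamma_{c}(H_{1},H_{2})$ if and only if $T^{\dagger}$ is bounded and norm attaining, and $T^{*}T\in\Gamma_{c}(H_{1})$ if and only if $(T^{*}T)^{\dagger}$ is bounded and norm attaining. The bounded-range parts match up because by Proposition \ref{equivalentclosedrange} the conditions ``$R(T)$ closed'' and ``$R(T^{*}T)$ closed'' are equivalent. So the whole statement reduces to: under the closed-range hypothesis, $T^{\dagger}$ is norm attaining if and only if $(T^{*}T)^{\dagger}$ is norm attaining.

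Next I would use the identity from Theorem \ref{propertiesofgeninve1}(8), namely $(T^{*}T)^{\dagger}=T^{\dagger}T^{*\dagger}$, together with Theorem \ref{propertiesofgeninve1}(6), which gives $T^{*\dagger}=T^{\dagger *}$. When $T^{\dagger}$ is bounded, $T^{\dagger *}$ is simply the Hilbert-space adjoint $(T^{\dagger})^{*}$, so writing $S:=T^{\dagger}\in\mathcal{B}(H_{2},H_{1})$ we obtain $(T^{*}T)^{\dagger}=SS^{*}$. The remaining task is the bounded-operator fact: $S\in\mathcal{B}(H_{2},H_{1})$ is norm attaining if and only if $SS^{*}$ is norm attaining.

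The core of this last step is the observation that $SS^{*}$ is a bounded positive self-adjoint operator with $\|SS^{*}\|=\|S\|^{2}$, so it attains its norm at $y_{0}$ (with $\|y_{0}\|=1$) precisely when $\|S\|^{2}$ is an eigenvalue of $SS^{*}$ with eigenvector $y_{0}$; this follows from the Cauchy--Schwarz equality case applied to $\|S\|^{2}=\langle SS^{*}y_{0},y_{0}\rangle\le\|SS^{*}y_{0}\|\le\|S\|^{2}$. Assuming $SS^{*}y_{0}=\|S\|^{2}y_{0}$, one computes $\|S^{*}y_{0}\|^{2}=\langle SS^{*}y_{0},y_{0}\rangle=\|S\|^{2}$, and then $x_{0}:=S^{*}y_{0}/\|S\|$ is a unit vector with $Sx_{0}=\|S\|y_{0}$, so $S$ attains its norm. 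Conversely, if $Sx_{0}$ attains with $\|x_{0}\|=1$, Cauchy--Schwarz equality in $\|S\|^{2}=\langle S^{*}Sx_{0},x_{0}\rangle\le\|S^{*}Sx_{0}\|\le\|S\|^{2}$ forces $S^{*}Sx_{0}=\|S\|^{2}x_{0}$, whence $y_{0}:=Sx_{0}/\|S\|$ is a unit eigenvector of $SS^{*}$ for the eigenvalue $\|S\|^{2}=\|SS^{*}\|$, and $SS^{*}$ attains its norm.

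The main technical obstacle, and the only place requiring real work, is the last equivalence ``$S$ attains its norm $\iff SS^{*}$ attains its norm.'' Everything else is a bookkeeping exercise using the Moore--Penrose inverse identities already collected in the preliminaries. The identification $T^{*\dagger}=(T^{\dagger})^{*}$ must be used carefully, but is immediate once $T^{\dagger}$ is known to be bounded.
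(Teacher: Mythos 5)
Your proposal is correct and follows essentially the same route as the paper: both reduce the statement via Theorem \ref{dualrelation} to norm attainment of $T^{\dagger}$, use the identity $(T^{*}T)^{\dagger}=T^{\dagger}(T^{\dagger})^{*}$ from Theorem \ref{propertiesofgeninve1}, and then rest on the equivalence between norm attainment of a bounded operator $S$ and of $SS^{*}$. The only difference is that the paper obtains that last equivalence by passing through $(T^{\dagger})^{*}$ and citing \cite{carvajalneves1}, whereas you prove it directly; your eigenvector argument is sound, though in the direction starting from $\|SS^{*}y_{0}\|=\|SS^{*}\|$ you should first deduce $\langle SS^{*}y_{0},y_{0}\rangle=\|S\|^{2}$ from positivity (e.g.\ via $\|SS^{*}y_{0}\|\le \|(SS^{*})^{1/2}\|\,\|(SS^{*})^{1/2}y_{0}\|$) before concluding $SS^{*}y_{0}=\|S\|^{2}y_{0}$.
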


\begin{proof}
By Theorem \ref{dualrelation}, $T\in \Gamma_c(H_1,H_2)$ if and only if $R(T)$ is closed and $T^{\dagger}\in \mathcal N(H_2,H_1)$. This is equivalent to the condition that $(T^*)^{\dagger}=(T^{\dagger})^* \in \mathcal N(H_1,H_2)$. This is in turn equivalent to the fact that  $(T^{\dagger})(T^{\dagger})^*\in \mathcal N(H_2)$. But $(T^{\dagger})(T^{\dagger})^*=(T^*T)^{\dagger}$, by Theorem \ref{propertiesofgeninve1}. Thus by Theorem \ref{dualrelation}, $T^*T\in \Gamma_c(H_1)$.
\end{proof}
\begin{prop}
Let $T\in \mathcal C(H_1,H_2)$. Then $T\in \Gamma_c(H_1,H_2)$ if and only if $T^*\in \Gamma_c(H_2,H_1)$.
\end{prop}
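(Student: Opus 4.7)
The plan is to reduce everything to Theorem \ref{dualrelation} and the fact (already invoked in the proof of Proposition \ref{redmingram}) that $\mathcal N(H_1,H_2)$ is closed under taking Hilbert-space adjoints. First I would apply Theorem \ref{dualrelation} to both $T$ and $T^*$: membership $T\in\Gamma_c(H_1,H_2)$ is equivalent to $T^\dagger\in\mathcal B(H_2,H_1)\cap\mathcal N(H_2,H_1)$, while $T^*\in\Gamma_c(H_2,H_1)$ is equivalent to $(T^*)^\dagger\in\mathcal B(H_1,H_2)\cap\mathcal N(H_1,H_2)$.

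Next I would use Theorem \ref{propertiesofgeninve1}(6), $(T^*)^\dagger=(T^\dagger)^*$, together with the standard fact that for a bounded operator $A$ between Hilbert spaces one has $A\in\mathcal B(H_2,H_1)$ iff $A^*\in\mathcal B(H_1,H_2)$ (both having the same norm). This reduces the proposition to the claim that a bounded Hilbert-space operator attains its norm iff its adjoint does. Since $R(T)$ being closed is equivalent to $R(T^*)$ being closed by Proposition \ref{equivalentclosedrange}, the boundedness parts of the two conditions match automatically, so only the norm-attainment equivalence is at issue.

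The norm-attainment equivalence is the only substantive step, but it is a routine polarisation-type argument: if $A\in\mathcal B(H_2,H_1)$ attains its norm at $x_0$ with $\|x_0\|=1$ and $Ax_0\neq 0$, then for $y_0:=Ax_0/\|Ax_0\|$ one computes
\begin{equation*}
\|A\|=\|Ax_0\|=\langle Ax_0,y_0\rangle=\langle x_0,A^*y_0\rangle\leq\|A^*y_0\|\leq\|A^*\|=\|A\|,
\end{equation*}
forcing $\|A^*y_0\|=\|A^*\|$, and the reverse implication is identical with the roles of $A$ and $A^*$ swapped. This fact is exactly the one silently used in the proof of Proposition \ref{redmingram}, so in practice I would just cite that step. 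Combining the three ingredients — Theorem \ref{dualrelation} in both directions, $T^{*\dagger}=T^{\dagger *}$, and the adjoint-stability of norm attainment — delivers the equivalence with no further work. I do not anticipate a genuine obstacle; the only place where one must be slightly careful is to note that the closedness of $R(T)$ (needed to make $T^\dagger$ bounded) is automatic from either hypothesis by Proposition \ref{closedrange} applied to $T$ or $T^*$.
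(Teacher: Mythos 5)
Your proposal is correct and follows essentially the same route as the paper: both reduce the statement to Theorem \ref{dualrelation}, the identity $(T^*)^{\dagger}=(T^{\dagger})^{*}$ from Theorem \ref{propertiesofgeninve1}, and the fact that a bounded operator attains its norm if and only if its adjoint does. The only difference is that you prove this last fact directly by the Cauchy--Schwarz equality argument, whereas the paper cites it from Carvajal and Neves; your computation is valid (the degenerate case $A=0$ being trivial), so nothing is missing.
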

\begin{proof}
If $T\in \Gamma_c(H_1,H_2)$, then $R(T)$ is closed and so is $R(T^*)$. Also, we have $\gamma(T)=\gamma(T^*)$. By Theorem \ref{dualrelation}, $T^{\dagger}\in \mathcal N(H_2,H_1)$. Also, $(T^{\dagger})^{*}\in \mathcal N(H_1,H_2)$, by  \cite[Proposition 2.5]{carvajalneves1}. Note that $(T^{\dagger})^*=(T^*)^{\dagger}$, by Thoerem \ref{propertiesofgeninve1}. Hence by Theorem \ref{dualrelation} again, $T^*\in \Gamma_c(H_2,H_1)$. Applying the same result for $T^*$ and observing that $T^{**}=T$, we get the other way implication.
\end{proof}

\begin{rem}
The above result need not hold for minimum attaining operators. Let $H=\ell^2$ and ${\{e_n:n\in \mathbb N}\}$ denote the standard orthonormal basis for $H$. That is $e_n(m)=\delta_{nm}$, the Dirac delta function.
Define operators $D,R:H\rightarrow H$ by
\begin{align*}
De_n&=\dfrac{1}{n}e_n,\\
Re_n&=e_{n+1} \; \text{for each}\;  n\in \mathbb N.
\end{align*}
Let $T=RD$. Since $R$ is an isometry, we have $m(T)=m(D)=d(0,\sigma(D))=\inf{\Big\{\dfrac{1}{n}: n\in \mathbb N}\Big\}=0$. Since $0\notin \sigma_p(D)$, $D$ is not minimum attaining. Thus $T$ is not minimum attaining. But, $N(T^*)=\text{span}{\{e_1}\}$. So $m(T^*)=0$ and $T^*\in \mathcal M(H)$. Note that $T^*T=D^2$ cannot have closed range since $D$ is compact. Equivalently, $R(T)$ is not closed, whence $T$ cannot attain its reduced minimum by Theorem \ref{dualrelation}.
\end{rem}

\begin{prop}\label{subclassofminatt}
Let $T\in \mathcal C(H_1,H_2)$. If $T\in \Gamma_c(H_1,H_2)$, then $T\in \mathcal M_c(H_1,H_2)$.
\end{prop}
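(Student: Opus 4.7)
The plan is to reduce the proposition to a simple case split on whether or not $T$ is injective, so the hypothesis that $T$ attains its reduced minimum will only be needed in one of the two cases.

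First I would observe that if $N(T) \neq \{0\}$, then one can pick any $x_0 \in N(T)$ with $\|x_0\|=1$. Since $N(T) \subseteq D(T)$, such an $x_0$ lies in $S_{D(T)}$, and $\|Tx_0\|=0$. Because $m(T) \geq 0$ by definition, this forces $m(T)=0=\|Tx_0\|$, so $T$ is minimum attaining. Note that this direction does not actually use the hypothesis $T \in \Gamma_c(H_1,H_2)$.

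Next, in the remaining case $N(T)=\{0\}$, the carrier simplifies: $C(T)=D(T)\cap N(T)^{\perp}=D(T)\cap H_1=D(T)$. Consequently $S_{C(T)}=S_{D(T)}$, so by the definitions of $m(T)$ and $\gamma(T)$ one has $m(T)=\gamma(T)$. By the hypothesis $T\in \Gamma_c(H_1,H_2)$, pick $x_0 \in S_{C(T)}$ with $\|Tx_0\|=\gamma(T)$. Then $x_0 \in S_{D(T)}$ and $\|Tx_0\|=\gamma(T)=m(T)$, showing $T \in \mathcal{M}_c(H_1,H_2)$.

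There is no real obstacle here; the essential content is just that attaining the infimum on $C(T)$ coincides with attaining it on $D(T)$ whenever $T$ is one-to-one, and that otherwise any unit null vector trivially attains $m(T)=0$. One could alternatively package the injective case through Lemma \ref{mmandrmm} (which identifies $m(T_C)$ with $\gamma(T)$) together with the observation that $T_C=T$ when $N(T)=\{0\}$, but the direct argument above is cleaner.
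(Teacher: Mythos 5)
Your proof is correct and follows essentially the same route as the paper: a case split on whether $N(T)=\{0\}$, using $C(T)=D(T)$ and $m(T)=\gamma(T)$ in the injective case, and a unit null vector attaining $m(T)=0$ otherwise. The extra detail you supply (why $m(T)=\gamma(T)$ when $T$ is one-to-one) is a welcome elaboration of what the paper states without justification.
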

\begin{proof}
First assume that $T$ is one-to-one. Then $\gamma(T)=m(T)$. Hence if $T\in \Gamma_c(H_1,H_2)$, then clearly $T\in \mathcal M_c(H_1,H_2)$. If $T$ is not one-to-one, then $N(T)\neq {\{0}\}$. Hence in this case
$m(T)=0$ and there exists a $0\neq x\in N(T)$ such that $Tx=0$. Hence clearly $T\in \mathcal M_c(H_1,H_2)$. This completes the proof.
\end{proof}

\begin{prop}\label{minmodformulapos}\cite[Proposition 3.5]{shkgr7}
Let $T\in \mathcal C(H)$ be positive. Then
\begin{equation*}
m(T)=\inf{\{\langle Tx,x\rangle: x\in S_{D(T)}}\}.
\end{equation*}
In particular, if $T\in \mathcal C(H_1,H_2)$, then $m(T^*T)=m(T)^2$.
\end{prop}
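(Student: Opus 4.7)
The plan is to deduce both statements from the spectral theorem for unbounded positive self-adjoint operators, combined with Lemma~\ref{minmodselfadj}. Set $\alpha := \inf\{\langle Tx,x\rangle : x \in S_{D(T)}\}$; the goal is to show $\alpha = m(T)$.

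First I would handle the easy direction $\alpha \le m(T)$ via the Cauchy--Schwarz inequality: for any $x \in S_{D(T)}$,
\[
\langle Tx,x\rangle \le \|Tx\|\,\|x\| = \|Tx\|,
\]
so taking infimum over $S_{D(T)}$ gives $\alpha \le m(T)$. For the reverse inequality I would invoke the spectral theorem. Since $T \ge 0$, one has $\sigma(T) \subseteq [0,\infty)$, and Lemma~\ref{minmodselfadj}(1) yields $m(T) = d(0,\sigma(T)) = \inf \sigma(T) =: \beta$. Writing the spectral resolution $T = \int_{[0,\infty)} \lambda\, dE(\lambda)$ with support contained in $\sigma(T)$, for each unit vector $x \in D(T)$ the scalar measure $\mu_x(\cdot) := \langle E(\cdot)x,x\rangle$ is a Borel probability measure supported in $\sigma(T)$, and
\[
\langle Tx,x\rangle = \int_{\sigma(T)} \lambda\, d\mu_x(\lambda) \ge \beta \int d\mu_x = \beta = m(T).
\]
Taking infimum gives $\alpha \ge m(T)$, establishing the first identity.

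For the ``in particular'' assertion I would apply the identity just proved to the positive operator $T^*T$. Since $\langle T^*Tx,x\rangle = \|Tx\|^2$ for every $x \in D(T^*T)$, Part~1 yields
\[
m(T^*T) = \inf\{\|Tx\|^2 : x \in S_{D(T^*T)}\}.
\]
To identify this with $m(T)^2$ I would use the identity $m(T) = m(|T|)$ noted in the remark following the polar decomposition, together with the spectral mapping theorem applied to $|T| \ge 0$, which gives $\sigma(T^*T) = \sigma(|T|^2) = \{\lambda^2 : \lambda \in \sigma(|T|)\}$. Using Lemma~\ref{minmodselfadj}(1) for both $T^*T$ and $|T|$,
\[
m(T^*T) = \inf \sigma(T^*T) = (\inf \sigma(|T|))^2 = m(|T|)^2 = m(T)^2.
\]

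The main technical subtlety is the correct invocation of the spectral theorem for an unbounded positive self-adjoint operator, in particular the representation $\langle Tx,x\rangle = \int \lambda\, d\mu_x(\lambda)$ for all $x \in D(T)$; once that is in place, the lower bound is an immediate consequence of integrating against a probability measure supported in $[\inf \sigma(T), \infty)$. An alternative route would factor $T = S^2$ with $S = T^{1/2}$ and reduce to $\alpha = m(S)^2 = m(T)$, but this shifts the difficulty onto the fact that $D(T)$ is a core for $S$, which is of comparable depth to the spectral theorem argument above.
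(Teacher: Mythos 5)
Your argument is correct, but there is nothing in the paper to compare it against: Proposition~\ref{minmodformulapos} is imported verbatim from \cite[Proposition 3.5]{shkgr7} and the paper supplies no proof of its own. Taken on its merits, your proof is sound and self-contained relative to what precedes it in the paper. The Cauchy--Schwarz direction $\alpha\le m(T)$ is immediate, and the lower bound via the spectral measure ($\langle Tx,x\rangle=\int_{\sigma(T)}\lambda\,d\mu_x\ge\inf\sigma(T)$ for unit $x\in D(T)$) combined with Lemma~\ref{minmodselfadj}(1), i.e.\ $m(T)=d(0,\sigma(T))=\inf\sigma(T)$ for $T\ge 0$, closes the first identity; since that lemma is established before the proposition and does not depend on it, there is no circularity. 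You also correctly identified and sidestepped the one genuine trap in the ``in particular'' clause: the naive route $m(T^*T)=\inf\{\|Tx\|^2:x\in S_{D(T^*T)}\}$ does not immediately equal $m(T)^2$ because $D(T^*T)$ is a proper (though core) subspace of $D(T)$; computing both sides spectrally via $\sigma(T^*T)=\{\lambda^2:\lambda\in\sigma(|T|)\}$ and $m(T)=m(|T|)$ avoids having to prove the core approximation. The only cosmetic point is that once you take the spectral route for the second claim, the displayed intermediate formula for $m(T^*T)$ as an infimum over $S_{D(T^*T)}$ is not actually used and could be dropped.
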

\begin{prop}\label{rmmformula}
Let $T\in \mathcal C(H)$ be positive. Then
\begin{equation*}
\gamma(T)=\inf{\{\langle Tx,x\rangle: x\in S_{C(T)}}\}.
\end{equation*}
In particular, if $T\in \mathcal C(H_1,H_2)$, then $\gamma(T^*T)=\gamma(T)^2$.
\end{prop}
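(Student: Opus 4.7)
The plan is to mirror the proof of Proposition \ref{minmodformulapos}, reducing the reduced minimum to an ordinary minimum via the restriction $T_C$.

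First I would treat the quadratic-form identity. Since $T\geq 0$ is self-adjoint, $N(T)$ reduces $T$, and therefore so does $N(T)^{\perp}=\overline{C(T)}$. Consequently $T_C := T|_{C(T)}$ is a densely defined closed positive operator on the Hilbert space $N(T)^{\perp}$. The identity $m(T_C)=\gamma(T)$ (the same identity that drives Lemma \ref{mmandrmm}), combined with Proposition \ref{minmodformulapos} applied to $T_C$ inside $N(T)^{\perp}$, gives
\begin{equation*}
\gamma(T)=m(T_C)=\inf\{\langle T_C x,x\rangle : x\in S_{D(T_C)}\}=\inf\{\langle Tx,x\rangle : x\in S_{C(T)}\},
\end{equation*}
which is the first assertion.

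For the second assertion I would apply the first part to the positive operator $T^*T$, obtaining
\begin{equation*}
\gamma(T^*T)=\inf\{\langle T^*Tx,x\rangle : x\in S_{C(T^*T)}\}=\inf\{\|Tx\|^2 : x\in S_{C(T^*T)}\}.
\end{equation*}
Since $N(T^*T)=N(T)$, we have $C(T^*T)=D(T^*T)\cap N(T)^{\perp}\subseteq D(T)\cap N(T)^{\perp}=C(T)$, which yields $\gamma(T^*T)\geq\gamma(T)^2$ for free. For the reverse inequality I would fix $y\in S_{C(T)}$ with $\|Ty\|^2$ close to $\gamma(T)^2$ and use the well-known fact that $D(T^*T)$ is a core for $T$ (a consequence of von Neumann's theorem on $I+T^*T$) to pick a sequence $y_n\in D(T^*T)$ with $y_n\to y$ and $Ty_n\to Ty$. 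Setting $z_n := P_{N(T)^{\perp}}y_n = y_n - P_{N(T)}y_n$ and using $N(T)\subseteq D(T^*T)$ with $T^*T|_{N(T)}=0$, one checks that $z_n\in D(T^*T)\cap N(T)^{\perp}=C(T^*T)$, $Tz_n=Ty_n\to Ty$, and $z_n\to y$. Normalizing $z_n$ and then letting $\|Ty\|^2\to\gamma(T)^2$ gives $\gamma(T^*T)\leq\gamma(T)^2$.

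The main obstacle lies in this density step: unlike the first formula, where the passage to $T_C$ is a clean operator-theoretic reduction, here the domains $C(T^*T)$ and $C(T)$ are genuinely different, and equality of the two infima requires both the core property of $D(T^*T)$ for $T$ and the projection trick above (so that the approximating vectors end up in $C(T^*T)$ while the values of $\|T(\cdot)\|$ are preserved in the limit).
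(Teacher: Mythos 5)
Your proof is correct and follows the same route as the paper: the first identity is obtained exactly as in the paper's proof, by combining $\gamma(T)=m(T_C)$ with Proposition \ref{minmodformulapos} applied to the positive operator $T_C$ on the Hilbert space $N(T)^{\perp}$. For the second identity the paper merely says ``apply the formula to $T^*T$ and use the definition of $\gamma(T)$,'' silently identifying the infima over $S_{C(T^*T)}$ and $S_{C(T)}$; your observation that $C(T^*T)\subseteq C(T)$ gives one inequality for free, and your core-of-$D(T^*T)$ argument together with the projection onto $N(T)^{\perp}$ supplies exactly the justification of the reverse inequality that the paper omits, so this is a correct filling-in of detail rather than a different method.
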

\begin{proof}
 Since, $T_C$ is positive, we have by Proposition \ref{minmodformulapos},
\begin{align*}
\gamma(T)=m(T_C)&=\inf{\{\langle T_Cx,x\rangle: x\in S_{C(T)}}\}\\
                               &=\inf{\{\langle Tx,x\rangle: x\in S_{C(T)}}\}.\qedhere
\end{align*}

Further, if $T\in \mathcal C(H_1,H_2)$, then $T^*T\in \mathcal C(H_1)$ is positive. Thus by applying the above formula for $T^*T$ and by the definition of $\gamma(T)$, we get the conclusion.
\end{proof}

\begin{rem}
If $T\in \mathcal C(H)$ be positive. Then the following statements are equivalent (see \cite[Proposition 3.8]{shkgr7}):

\begin{enumerate}
 \item $T\in \mathcal M_c(H)$
 \item  $m(T)$ is an eigenvalue of $T$
 \item $m(T)$ is an extreme point of $W(T)$.
\end{enumerate}

In general  if $T\in \Gamma_c(H)$ and is positive, then $\gamma(T)$ need not be an extreme point of the numerical range of $T$.
To see this,  consider the operator $T$ on $\mathbb C^3$,  whose matrix with respect to the standard orthonormal basis of $\mathbb C^3$ is $\left(
                                                                            \begin{array}{ccc}
                                                                              0 & 0 & 0 \\
                                                                              0 & \frac{1}{2} & 0 \\
                                                                              0 & 0 & 1 \\
                                                                            \end{array}
                                                                          \right)$. It can be easily computed that $\gamma(T)=\frac{1}{2}$, which is not an extreme point of $W(T)=[0,1]$, but $m(T)=0$, which is an extreme point of $W(T)$.
\end{rem}

\begin{prop}\label{modbddtransformrelns}
Let $T\in \mathcal C(H_1,H_2)$ and $F_T$ be the bounded transform of $T$. Then
\begin{enumerate}
\item \label{rmmbddtransform}$\gamma(F_T)=\dfrac{\gamma(T)}{\sqrt{1+\gamma(T)^2}}$
\item \label{mmboundedtransform}$m(F_T)=\dfrac{m(T)}{\sqrt{1+m(T)^2}}$.
\end{enumerate}
\end{prop}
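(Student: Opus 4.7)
The plan is to reduce both identities to a computation with the bounded positive operator $(I+T^*T)^{-1}$. Since $F_T \in \mathcal{B}(H_1, H_2)$, Propositions \ref{minmodformulapos} and \ref{rmmformula} yield $m(F_T)^2 = m(F_T^*F_T)$ and $\gamma(F_T)^2 = \gamma(F_T^*F_T)$, so it is enough to compute these two quantities and then take square roots. Functional calculus applied to the positive self-adjoint operator $T^*T$ gives the bounded identity
$$ F_T^*F_T = Q_T\, T^*T\, Q_T = T^*T\,(I+T^*T)^{-1} = I - (I+T^*T)^{-1}. $$

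Next, I would apply the inner-product characterizations of $m$ and $\gamma$ for positive operators (Propositions \ref{minmodformulapos} and \ref{rmmformula}) to the bounded positive operator $F_T^*F_T$ to write
\begin{align*}
m(F_T^*F_T) &= 1 - \sup_{x\in S_{H_1}}\langle (I+T^*T)^{-1}x,x\rangle = 1 - \|(I+T^*T)^{-1}\|,\\
\gamma(F_T^*F_T) &= 1 - \sup_{x\in S_{N(T)^\bot}}\langle (I+T^*T)^{-1}x,x\rangle = 1 - \|(I+T^*T)^{-1}|_{N(T)^\bot}\|,
\end{align*}
using that $C(F_T^*F_T) = N(F_T^*F_T)^\bot = N(T)^\bot$ and that for a bounded positive operator the operator norm coincides with the supremum of its quadratic form on the unit sphere. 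Remark \ref{reciprocalinversenorm} together with Proposition \ref{minmodformulapos} gives $\|(I+T^*T)^{-1}\| = 1/m(I+T^*T) = 1/(1+m(T)^2)$. Since $N(T)^\bot$ reduces $T^*T$ and $(T^*T)|_{N(T)^\bot}$ is exactly $(T^*T)_C$, Proposition \ref{rmmformula} delivers $m((T^*T)|_{N(T)^\bot}) = \gamma(T^*T) = \gamma(T)^2$, whence $\|(I+T^*T)^{-1}|_{N(T)^\bot}\| = 1/(1+\gamma(T)^2)$. Substituting and taking square roots will produce both formulas.

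The main obstacle I foresee is justifying $N(F_T) = N(T)$, on which the identification of $C(F_T^*F_T)$ rests. I would argue this directly: $Q_T$ is injective and acts as the identity on $N(T^*T) = N(T)$ (since $(I+0)^{-1/2} = I$ on that subspace), so $F_T x = T Q_T x = 0$ forces $Q_T x \in N(T)$ and therefore $x = Q_T x \in N(T)$.
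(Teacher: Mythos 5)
Your proof is correct, but it takes a genuinely different route from the paper's after the common first step. Both arguments reduce to the operator identity $F_T^*F_T=T^*T(I+T^*T)^{-1}=I-(I+T^*T)^{-1}$ and to computing $m$ and $\gamma$ of this bounded positive operator. The paper then invokes the spectral characterizations $\gamma(S)=d(0,\sigma(S)\setminus\{0\})$ (Proposition \ref{rmmselfadjoint}) and $m(S)=d(0,\sigma(S))$ (Lemma \ref{minmodselfadj}), and evaluates $\inf\{\mu/(1+\mu):\mu\in\sigma(T^*T)\setminus\{0\}\}$ by monotonicity of $\mu\mapsto\mu/(1+\mu)$. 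You instead stay entirely at the level of quadratic forms: you write $m(F_T^*F_T)$ and $\gamma(F_T^*F_T)$ as $1$ minus a supremum of the quadratic form of $\check T=(I+T^*T)^{-1}$, identify that supremum with the operator norm of $\check T$ (respectively of its restriction to $N(T)^{\perp}$), and convert via $\|A^{-1}\|=1/m(A)$ from Remark \ref{reciprocalinversenorm}. This buys you a proof that avoids the distance-to-spectrum formulas and spectral mapping altogether, at the price of some extra bookkeeping — you must check $N(F_T)=N(T)$ (which you do correctly, and which the paper itself uses without comment in Proposition \ref{boundedtransformredminattaining}) and that $N(T)^{\perp}$ reduces $T^*T$ so that the restricted inverse is the inverse of the restriction. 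The paper's route is shorter given that Proposition \ref{rmmselfadjoint} is already in hand; yours is more self-contained and makes the role of the carrier decomposition explicit. Both silently ignore the degenerate case $C(T)=\{0\}$, so that is not a point against you.
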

\begin{proof}
Proof of (\ref{rmmbddtransform}):
In view of Proposition \ref{rmmformula} it  is enough to show that $\gamma(F_T^*F_T)=\dfrac{\gamma(T^*T)}{1+\gamma(T^*T)}$. First we note that $F_T^*F_T=T^*T(I+T^*T)^{-1}=I-(I+T^*T)^{-1}$. Using the formula in Proposition \ref{rmmselfadjoint},
  we get
\begin{align*}
\gamma(F_T^*F_T)&=d(0, \sigma(F_T^*F_T)\setminus {\{0}\})\\
                                &=\inf{\{\dfrac{\mu}{1+\mu}:\mu\in \sigma(T^*T)\setminus{\{0}\}}\}\\
                                &=\inf{\{1-\dfrac{1}{1+\mu}:\mu\in \sigma(T^*T)\setminus{\{0}\}}\}\\
                                &=1-\sup{\{\dfrac{1}{1+\mu}:\mu\in \sigma(T^*T)\setminus{\{0}\}}\}\\
                                &=1-\dfrac{1}{1+\inf{\{\mu:\mu\in \sigma(T^*T)\setminus{\{0}\} }\}}\\
                                &=\dfrac{\gamma(T^*T)}{1+\gamma(T^*T)}.
\end{align*}
Hence we can conclude that $\gamma(F_T)=\dfrac{\gamma(T)}{\sqrt{1+\gamma(T)^2}}$.

Proof of (\ref{mmboundedtransform}): To prove this we need to use  (\ref{minmodformula}) of Lemma \ref{minmodselfadj} and follow the similar steps as above.
\end{proof}

\begin{prop}\label{boundedtransformredminattaining}
Let $T\in \mathcal C(H_1,H_2)$. Then $T\in \Gamma_c(H_1,H_2)$ if and only if $F_T\in \Gamma(H_1,H_2)$.
\end{prop}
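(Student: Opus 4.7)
\emph{Proof proposal.}
The plan is to set up an explicit norm-rescaled bijection between attainers of $\gamma(T)$ in $S_{C(T)}$ and attainers of $\gamma(F_T)$ in $S_{C(F_T)}$, using the positive contraction $Q_T = (I+T^*T)^{-1/2}$ as the transfer map. In the forward direction I would start from $x_0 \in S_{C(T)}$ with $\|Tx_0\|=\gamma(T)$, lift it through $Q_T$ to the unique $z \in N(T)^\perp$ with $Q_T z = x_0$, and then normalize $z_0 := z/\|z\|$ inside $S_{C(F_T)}$; the converse reverses this.

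The whole argument rests on three structural observations. (i) $N(F_T) = N(T)$ and therefore $C(F_T) = N(T)^\perp$, which follows because $Q_T$ fixes $N(T^*T)=N(T)$ pointwise and, via functional calculus, commutes with the spectral projection onto $N(T)^\perp$. (ii) $Q_T$ restricts to a bijection from $N(T)^\perp$ onto $C(T)$: surjectivity comes from $R(Q_T) = D(T)$ combined with the decomposition $D(T) = N(T) \oplus^\perp C(T)$, and injectivity is immediate from $Q_T>0$. (iii) The identity
$$\|Q_T z\|^2 = \|z\|^2 - \|F_T z\|^2 \qquad (z \in H_1),$$
which falls out of the relation $Q_T^2 = I - F_T^*F_T$ in the proposition defining the bounded transform.

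Granting (i)--(iii), the quantitative matching is short. From $x_0 \in S_{C(T)}$ attaining $\gamma(T)$, the lifted vector $z = Q_T^{-1}x_0 \in N(T)^\perp$ satisfies $F_T z = T Q_T z = Tx_0$ of norm $\gamma(T)$, and (iii) forces $\|z\|^2 = 1 + \gamma(T)^2$. Proposition~\ref{modbddtransformrelns}(1) then identifies $\|F_T z\|/\|z\| = \gamma(T)/\sqrt{1+\gamma(T)^2} = \gamma(F_T)$, so $F_T$ attains its reduced minimum at $z_0 = z/\|z\|$. The converse is perfectly symmetric: starting from $z_0 \in S_{C(F_T)}$ attaining $\gamma(F_T)$, identity (iii) gives $\|Q_T z_0\|^2 = 1-\gamma(F_T)^2 = 1/(1+\gamma(T)^2)$, and after normalization one recovers $\|T(Q_T z_0/\|Q_T z_0\|)\| = \gamma(F_T)\sqrt{1+\gamma(T)^2} = \gamma(T)$.

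I expect the main technical friction to be a careful justification of (i) and (ii), since $C(T)$ is typically a proper dense subspace of $C(F_T) = N(T)^\perp$ and $Q_T^{-1}$ is unbounded. Once the commutation of $Q_T$ with the spectral projections of $T^*T$ is spelled out and $R(Q_T) = D(T)$ is invoked, the rest reduces to the arithmetic identity packaged in Proposition~\ref{modbddtransformrelns}(1).
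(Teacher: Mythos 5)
Your proposal is correct, but it takes a genuinely different route from the paper's proof. The paper first reduces the statement to positive operators via Proposition~\ref{redmingram} (so that it suffices to compare $T^*T$ with $F_T^*F_T$), and then exploits the eigenvalue characterization of reduced-minimum attainment for positive operators: an attaining vector for $T^*T$ is an eigenvector with eigenvalue $\gamma(T^*T)$, and the functional-calculus identity $F_T^*F_T=I-(I+T^*T)^{-1}$ carries such eigenvectors to eigenvectors of $F_T^*F_T$ with eigenvalue $\gamma(F_T^*F_T)$, and back. You instead work directly with $T$ and $F_T$ and build an explicit bijection between attaining unit vectors, transported by $Q_T$ and rescaled using the exact identity $\|Q_Tz\|^2=\|z\|^2-\|F_Tz\|^2$. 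Your three structural observations all hold and are provable exactly as you indicate: $N(F_T)=N(T)$ and $Q_T(N(T)^\perp)=C(T)$ follow from $R(Q_T)=D(T)$, the injectivity of $Q_T$, and the fact that $Q_T$ acts as the identity on $N(T)=N(T^*T)$ and leaves $N(T)^\perp$ invariant; the arithmetic then matches $\gamma(T)/\sqrt{1+\gamma(T)^2}$ with $\gamma(F_T)$ via Proposition~\ref{modbddtransformrelns}. What your approach buys is independence from the eigenvalue characterization of attainment for positive operators and from Proposition~\ref{redmingram}, plus an explicit correspondence between the attaining vectors of $T$ and of $F_T$ (namely $z_0=Q_T^{-1}x_0/\|Q_T^{-1}x_0\|$ and $x_0=Q_Tz_0/\|Q_Tz_0\|$), which is more information than the paper's argument records; the cost is that you must carefully justify the reducing-subspace facts (i)--(ii) and handle the unbounded map $Q_T^{-1}$, whereas the paper's spectral argument sidesteps these by working entirely inside the bounded positive operator $F_T^*F_T$.
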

\begin{proof}

In view of Proposition \ref{redmingram}, it suffices to show that $T^*T\in \Gamma_c(H_1)$ if and only if $F_T^*F_T\in\Gamma(H_1)$. First, note that $(F_T)^*=F_{T^*}$.  If $T^*T\in \Gamma_c(H_1)$, there exists $x_0\in S_{C(T^*T)}$ such that $T^*Tx_0=\gamma(T^*T)x_0$. Then we have
\begin{align*}
F_T^*F_Tx_0&=\left( I-(I+T^*T)^{-1}\right)x_0\\
                      &=\dfrac{\gamma(T^*T)}{1+\gamma(T^*T)}x_0\\
                      &=\dfrac{\gamma(T)^2}{1+\gamma(T)^2}x_0\\
                      &=\gamma(F_T)^2x_0\\
                      &=\gamma(F_T^*F_T)x_0.
\end{align*}
 This shows that $F_T^*F_T\in \Gamma(H_1)$.

   To prove the converse, suppose  $F_T^*F_T\in \Gamma(H_1)$. Then $F_T^*F_T=F_{T^*}F_T=T^*T(I+T^*T)^{-1}\in \Gamma(H_1)$. Thus there exists $x_0\in N((F_T)^*F_T)^{\bot}=N(F_T)^{\bot}=N(T)^{\bot}$  such that $T^*T(I+T^*T)^{-1}x_0=\gamma(F_{T^*}F_T)x_0$. By Proposition \ref{modbddtransformrelns}, we can obtain that
\begin{equation*}
(I-(I+T^*T)^{-1})(x_0)=\big(1-(\dfrac{1}{1+\gamma(T)^2})\big)(x_0).
\end{equation*}
Equivalently, $(I+T^*T)^{-1}(x_0)=\dfrac{1}{1+\gamma(T)^2}x_0$. That is $x_0\in R\big((I+T^*T)^{-1}\big)=D(I+T^*T)=D(T^*T)$. It follows that $(I+T^*T)(x_0)=(1+\gamma(T)^2)(x_0)$ or $T^*Tx_0=\gamma(T^*T)x_0$, concluding $T^*T$ attains its reduced minimum and so is $T$.
\end{proof}

Next, we would like to prove a Lindenstrauss type theorem for the class of reduced minimum attaining operators. We need the following results for this purpose.
\begin{theorem}\label{specialgapformula} \cite[Theorem 3.1]{shkgrminattaining} Let $S,T\in \mathcal C(H_1,H_2)$ and $D(S)=D(T)$. Then
\begin{enumerate}
\item the operators
$\widehat T^\frac{1}{2}(T-S) \check S^\frac{1}{2}$ and $\widehat S^\frac{1}{2}(T-S) \check T^\frac{1}{2}$  are bounded and
\begin{equation*}
  \theta(S,T)=\max \; \Big\{\| \widehat T^\frac{1}{2}(T-S) \check S^\frac{1}{2}\|,\; \| \widehat S^\frac{1}{2}(T-S) \check T^\frac{1}{2}\| \Big\}
  \end{equation*}

\item if $T-S$ is bounded, then $\theta(S,T)\leq \|S-T\|$.
\end{enumerate}
\end{theorem}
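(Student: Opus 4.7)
The plan is to derive the formula in (1) directly from the general gap formula of Theorem \ref{shkgrgapformula} by using the hypothesis $D(S)=D(T)$ to collapse each of the two ``crossed'' differences into a single product of the form $\widehat X^{1/2}(T-S)\check Y^{1/2}$.

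The key algebraic fact I need is the intertwining identity $T\check T^{1/2}=\widehat T^{1/2}T$ on $D(T)$ (and similarly for $S$). This follows from the bounded transform proposition preceding Lemma \ref{boundedclosure}: the bounded transform is $F_T:=T\check T^{1/2}$, it is globally bounded, and its adjoint is $(F_T)^*=F_{T^*}=T^*\widehat T^{1/2}$, likewise bounded. Taking the adjoint once more and using $T^{**}=T$ (as $T$ is closed and densely defined), the standard adjoint-of-a-product rule gives
\begin{equation*}
F_T=(F_{T^*})^* = (T^*\widehat T^{1/2})^* \supseteq \widehat T^{1/2}\,T,
\end{equation*}
so that $T\check T^{1/2}x=F_Tx=\widehat T^{1/2}Tx$ for every $x\in D(T)$.

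To deploy this in the formula of Theorem \ref{shkgrgapformula}, I note that $\check S^{1/2}$ is bounded with range $D(|S|)=D(S)=D(T)$. Consequently, for any $x\in H_1$, the vector $\check S^{1/2}x$ lies in $D(T)$, and the intertwining identity yields
\begin{equation*}
T\check T^{1/2}\check S^{1/2}x-\widehat T^{1/2}S\check S^{1/2}x=\widehat T^{1/2}(T-S)\check S^{1/2}x,
\end{equation*}
where $T-S$ is interpreted as the operator on the common domain $D(T)=D(S)$. Swapping the roles of $S$ and $T$ handles the second maximum argument analogously (up to an irrelevant sign), so substitution into Theorem \ref{shkgrgapformula} delivers (1); the two operators appearing there are bounded because the left-hand side of Theorem \ref{shkgrgapformula} is.

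For (2), if $T-S$ is bounded on $D(T)$ then by density it has a unique bounded extension of the same norm, still denoted $T-S$. Functional calculus applied to the positive contractions $0\le\widehat T\le I$ and $0\le\check S\le I$ gives $\|\widehat T^{1/2}\|\le 1$ and $\|\check S^{1/2}\|\le 1$, and hence
\begin{equation*}
\|\widehat T^{1/2}(T-S)\check S^{1/2}\|\le \|T-S\|,
\end{equation*}
with an identical estimate for the swapped term; combining with (1) gives $\theta(S,T)\le\|S-T\|$. The principal subtlety is the intertwining identity, where I must justify an adjoint-of-a-product manipulation involving the unbounded factor $T^*$; everything afterwards reduces to routine substitution and spectral bounds on positive contractions.
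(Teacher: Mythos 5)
The paper does not prove this theorem at all: it is imported verbatim from \cite[Theorem 3.1]{shkgrminattaining} and \cite[Remark 3.7]{grMcIntoshformula}, so there is no in-paper proof to compare against. Your derivation is nonetheless correct and self-contained relative to the results the paper does quote. The crux is the intertwining relation $\widehat T^{1/2}T\subseteq T\check T^{1/2}$, and your justification of it is sound: $F_{T^*}=T^*\widehat T^{1/2}$ is bounded and everywhere defined, $(F_{T^*})^*=F_T=T\check T^{1/2}$, and the inclusion $B^*A^*\subseteq (AB)^*$ (valid for densely defined $A$ and $AB$) applied with $A=T^*$, $B=\widehat T^{1/2}$ together with $T^{**}=T$ gives exactly $\widehat T^{1/2}T\subseteq T\check T^{1/2}$. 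Since $R(\check S^{1/2})=D(S)=D(T)$, the substitution into Theorem \ref{shkgrgapformula} collapses both differences to $\pm\widehat X^{1/2}(T-S)\check Y^{1/2}$ as you claim, and boundedness of these operators is inherited from the boundedness of the four operators in Theorem \ref{shkgrgapformula}. Part (2) then follows from $\|\widehat T^{1/2}\|\le 1$ and $\|\check S^{1/2}\|\le 1$. This is in all likelihood the same route taken in the cited sources, which establish the general gap formula first and then specialize under $D(S)=D(T)$.
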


\begin{theorem}\label{minattainingdense}\cite[Theorem 3.5]{shkgrminattaining}
Let $T\in \mathcal C(H_1,H_2)$.  Then for each $\epsilon >0$, there exists  $S\in \mathcal B(H_1,H_2)$ with $\|S\|\leq \epsilon$ such that $S+T$ is minimum attaining and  $\theta(S+T,T)\leq \epsilon$. More over, if $m(T)>0$, then we can choose $S$ to be  a rank one operator.
\end{theorem}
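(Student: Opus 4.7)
The plan is to construct a rank-one $S$ from the spectral subspaces of $|T|$, using the polar decomposition $T = V|T|$, and then apply Weyl's essential-spectrum theorem to the modulus operator of $T+S$ to guarantee that the perturbed minimum is actually attained. If $N(T) \neq \{0\}$ the theorem is trivial: take $S := 0$, so that any unit vector in $N(T)$ realises $m(T) = 0$. So assume $N(T) = \{0\}$ and set $\alpha := m(T) \geq 0$. By Lemma \ref{minmodselfadj}, $\alpha \in \sigma(|T|)$, and therefore the spectral projection $P_\delta := E_{|T|}([\alpha, \alpha + \delta])$ of $|T|$ is nonzero for every $\delta > 0$. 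Pick a unit vector $x_0 \in R(P_\delta)$. Because the spectral interval is bounded, spectral calculus places $x_0 \in D(|T|^2) = D(T^*T)$ and gives
\[
  \||T|x_0 - \alpha x_0\|^2 = \int_{[\alpha, \alpha + \delta]} (t-\alpha)^2 \, d\|E_{|T|}(t) x_0\|^2 \leq \delta^2.
\]
Set $y := |T|x_0 - \alpha x_0$ and define
\[
  S := -\langle \cdot, x_0 \rangle \, V y,
\]
which is rank one with $\|S\| \leq \|y\| \leq \delta$, using that $V$ is isometric on $N(T)^{\perp} \ni y$. A direct computation gives $(T+S)x_0 = V(|T|x_0 - y) = \alpha V x_0$, of norm $\alpha$, so $m(T+S) \leq \alpha$.

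To show that $m(T+S)$ is actually attained, I pass to the positive self-adjoint operator $(T+S)^*(T+S)$, whose minimum modulus equals $m(T+S)^2$ by Proposition \ref{minmodformulapos}. The difference
\[
  (T+S)^*(T+S) - T^*T = T^*S + S^*T + S^*S
\]
is bounded and of rank at most two: $S^*T$ and $S^*S$ have range in $\mathrm{span}\{x_0\}$, and $T^*S$ has range in $\mathrm{span}\{|T|y\}$; even though $T^*$ is unbounded, the identity $T^*(Vy) = |T|V^*V y = |T|y$ is well defined because $V^*V y = y \in D(|T|)$. Thus $(T+S)^*(T+S)$ and $T^*T$ share the common domain $D(T^*T)$ and their resolvent difference is compact, so Weyl's essential-spectrum theorem yields
\[
  \sigma_{\mathrm{ess}}\big( (T+S)^*(T+S) \big) = \sigma_{\mathrm{ess}}(T^*T) \subseteq \sigma(T^*T) \subseteq [\alpha^2, \infty).
\]
By Lemma \ref{minmodselfadj}, $m(T+S)^2 = d(0, \sigma((T+S)^*(T+S)))$. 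If this distance equals $\alpha^2$ then $x_0$ realises it; otherwise it is strictly less than $\alpha^2$, lies in $\sigma \setminus \sigma_{\mathrm{ess}}$, and is therefore an isolated eigenvalue of finite multiplicity. In either case $(T+S)^*(T+S)$, and hence $T+S$, is minimum attaining.

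Finally, choosing $\delta \leq \epsilon$ forces $\|S\| \leq \epsilon$, and the gap inequality $\theta(T+S, T) \leq \|S\| \leq \epsilon$ is immediate from Theorem \ref{specialgapformula}(2). The main technical obstacle is showing that $(T+S)^*(T+S)$ is truly a compact perturbation of $T^*T$ in the unbounded setting: this requires $Vy \in D(T^*)$, guaranteed precisely because $y$ is drawn from a \emph{bounded} spectral subspace of $|T|$, which forces $x_0 \in D(T^*T)$ and $y \in D(|T|)$. A secondary subtlety, which also explains why the theorem privileges the case $m(T) > 0$ for the rank-one claim, is that the construction uses the spectral projection of $|T|$ rather than of $T^*T$ so that the polar factor produces the clean equation $(T+S)x_0 = \alpha V x_0$ without the $V$ introducing an extra defect in the norm.
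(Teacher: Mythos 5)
Your proof is essentially correct, though note that this statement is quoted in the paper from \cite{shkgrminattaining} and carries no proof here, so the comparison is necessarily with the argument of that reference. There the mechanism is more direct: one works with $|T|$ and its spectral measure $E$, and the perturbation is built so that the perturbed modulus \emph{visibly} equals $m(T)$ on the range of $E_{[m,m+\epsilon)}$ and is $\geq m(T)$ elsewhere, so attainment is read off immediately without any essential-spectrum input. Your route starts the same way (a unit vector $x_0$ in a bounded spectral subspace of $|T|$ near the bottom of the spectrum, transported through the polar factor $V$ to a rank-one $S$ with $(T+S)x_0=\alpha Vx_0$), but then you must rule out the possibility that the perturbation has pushed the minimum modulus \emph{below} $\alpha$ somewhere else; you do this by showing $(T+S)^*(T+S)$ and $T^*T$ have the same domain and differ by a bounded finite-rank operator, and invoking Weyl's theorem so that any new spectrum below $\alpha^2$ must be discrete, hence an eigenvalue, hence attained. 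This is heavier machinery but it is sound: the key domain checks ($x_0\in D(T^*T)$, $Vy\in D(T^*)$ with $T^*Vy=|T|y$ because $y$ lies in a bounded spectral subspace) are exactly the points that need verification in the unbounded setting, and you supply them. What your approach buys is that it works with an arbitrary rank-one correction at a single spectral vector rather than a correction supported on the whole spectral subspace; what it costs is the appeal to Weyl and to the self-adjointness of $(T+S)^*(T+S)$ via von Neumann's theorem, which the direct argument avoids. Two small loose ends: in the degenerate case $y=|T|x_0-\alpha x_0=0$ your $S$ is the zero operator, so the ``moreover'' clause (rank one when $m(T)>0$) needs a separate word; and you should record explicitly that $T+S$ is closed with $D(T+S)=D(T)$ so that Theorem \ref{specialgapformula}(2) applies. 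Neither affects the substance.
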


\begin{theorem}\label{rmmdensegeneral}
  Let $T\in \mathcal C(H_1,H_2)$ be densely defined. Then for each $\epsilon>0$ there exists $S\in \mathcal B(H_1,H_2)$ such that
  \begin{enumerate}
  \item $\|S\|\leq \epsilon$
  \item $N(T)=N(T+S)$ and
  \item $T+S$ attains reduced minimum.
  \end{enumerate}
  Moreover, if $\gamma(T)>0$, then we can choose $S$ to be a rank one operator.
\end{theorem}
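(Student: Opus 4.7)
The plan is to reduce to the Lindenstrauss-type density theorem for minimum attaining operators (Theorem \ref{minattainingdense}) via the polar decomposition $T=V|T|$. The main obstacle is condition (2), $N(T+S)=N(T)$: a direct application of Theorem \ref{minattainingdense} to the carrier-restricted operator $T_C$ yields a minimum-attaining $T_C+S_0$, but the minimum can be $0$, producing extra kernel in $C(T)$. I will sidestep this by first shifting the modulus $|T|_C$ by $(\epsilon/2)I$ so that minimum-attaining perturbations stay strictly bounded below, and then transport the perturbation back to $T$ through the isometry piece of the polar decomposition.

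The ``moreover'' statement is handled separately. If $m(T)>0$, then $N(T)=\{0\}$, $T_C=T$, and $\gamma(T)=m(T)$, so Theorem \ref{minattainingdense} applied to $T$ with parameter $\min(\epsilon, m(T)/2)$ yields a rank-one $S$ of norm strictly less than $m(T)$ with $T+S\in \mathcal M_c$. Then $m(T+S)\geq m(T)-\|S\|>0$, so $T+S$ is injective; hence $N(T+S)=\{0\}=N(T)$ and $\gamma(T+S)=m(T+S)$ is attained.

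For the general case, write $T=V|T|$ by polar decomposition, with $V|_{N(T)^\perp}$ isometric. Since $N(T)=N(|T|)$ reduces the self-adjoint $|T|$, its restriction $|T|_C:=|T|\big|_{C(T)}$ is positive self-adjoint on $N(T)^\perp$ with $m(|T|_C)=\gamma(T)$. Set
\[
A:=|T|_C+\tfrac{\epsilon}{2}\,I_{N(T)^\perp},
\]
so that $A$ is positive self-adjoint on $N(T)^\perp$ with $m(A)\geq \epsilon/2$. By Theorem \ref{minattainingdense} applied to $A$ with parameter $\epsilon/3$, there is $S_1 \in \mathcal B(N(T)^\perp)$ with $\|S_1\|\leq \epsilon/3$ and $A+S_1\in \mathcal M_c$. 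The triangle inequality gives $m(A+S_1)\geq \epsilon/6>0$, so $A+S_1$ is injective with its minimum attained at some $v_0 \in S_{C(T)}$.

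Finally define $S_0:=V\bigl(\tfrac{\epsilon}{2}I+S_1\bigr)\in \mathcal B(N(T)^\perp, H_2)$ and $S:=S_0 P_{N(T)^\perp}\in \mathcal B(H_1,H_2)$. Since $V$ is isometric on $N(T)^\perp$ and the range of $\tfrac{\epsilon}{2}I+S_1$ lies in $N(T)^\perp$, $\|S\|=\|S_0\|\leq \epsilon/2+\epsilon/3<\epsilon$. For $x=u+v\in D(T)$ with $u\in N(T)$ and $v\in C(T)$, a direct computation gives $(T+S)x=V(A+S_1)v$ and $\|(T+S)x\|=\|(A+S_1)v\|$. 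Injectivity of $A+S_1$ then forces $N(T+S)=N(T)$, so $C(T+S)=C(T)$ and $\gamma(T+S)=m(A+S_1)$, attained at $v_0$.
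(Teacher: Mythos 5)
Your proof is correct, and it takes a genuinely different route from the paper's. The paper argues in two cases: when $\gamma(T)>0$ it applies Theorem \ref{minattainingdense} directly to the carrier restriction $T_C$ with a perturbation of norm at most $\epsilon<\gamma(T)$, extends it by zero on $N(T)$, and preserves the kernel via the estimate $\|(T+S)x\|\geq(\gamma(T)-\epsilon)\|v\|$; when $\gamma(T)=0$ it first shifts $T$ itself by $\tfrac{\epsilon}{2}I$ and reduces to the first case. You instead route everything through the polar decomposition $T=V|T|$, shift the positive operator $|T|_C$ by $\tfrac{\epsilon}{2}I$, perturb that, and transport the perturbation back through the partial isometry. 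This buys a uniform argument with no case split, and it is more robust precisely where the paper's proof is most delicate: the shift $T+\tfrac{\epsilon}{2}I$ implicitly presupposes $H_1=H_2$, and the paper's bound $m(T+\tfrac{\epsilon}{2}I)\geq\tfrac{\epsilon}{4}$ is obtained by evaluating at a single unit vector, which does not control an infimum over the whole sphere; your shift of the positive operator gives $m(A)\geq\tfrac{\epsilon}{2}$ for free from $\langle Ax,x\rangle\geq\tfrac{\epsilon}{2}$ and Cauchy--Schwarz. The cost is the extra bookkeeping of checking that $V$ is isometric on its initial space $N(T)^{\perp}$ and that the range of $\tfrac{\epsilon}{2}I+S_1$ stays inside that initial space, both of which you verify correctly; your separate, direct treatment of the rank-one ``moreover'' clause is also sound.
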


\begin{proof}
First assume that $\gamma(T)>0$. Consider $T_C:=T|_{C(T)}:N(T)^{\bot}\rightarrow H_2$ is densely defined closed operator.
We may assume that $0<\epsilon<\gamma(T)$. By Theorem \ref{minattainingdense}, there exists $S_0\in \mathcal B(N(T)^{\bot},H_2)$ such that $\|S_0\|\leq \epsilon$ and $T_C+S_0$ is minimum attaining. That is, there exists $x_0\in D(T_C+S_0)=D(T_C)=C(T)$ such that $\|x_0\|=1$ and $\|(T_C+S_0)(x_0)\|=m(T_C+S_0)$. As $m(T_C)=\gamma(T)>0$, we can choose $S_0$ to be a rank one operator.

For $x=u+v\in H_1$ with $u\in N(T),\, v\in N(T)^{\bot}$, define $Sx=S_0v$. Then $\|Sx\|=\|S_0v\|\leq \|v\|\leq \epsilon \|x\|$. Thus $\|S\|\leq \epsilon$. Note that $S$ is a rank one operator.

We claim that $T+S$ attains reduced minimum. Note that $D(T+S)=D(T)$. Let $u\in N(T)$. Then $(T+S)(u)=Tu+Su=0$. Thus $N(T)\subseteq N(T+S)={\{x\in D(T):Tx+Sx=0}\}$. Suppose that $x\in D(T)\setminus N(T)$. Let $x=u+v$ with $u\in N(T)$ and $v\in N(T)^{\bot}$. Then $v\neq 0$. Also, $v\in C(T)$ as $x,u\in D(T)$.
Then
\begin{align*}
\|(T+S)(x)\|&=\|Tv+Sv\|\\
                      &=\|Tv+S_0v\|\\
                      &\geq \|Tv\|-\|S_0v\|\\
                      &\geq \gamma(T)\|v\|-\|S_0\|\|v\|\\
                      &\geq (\gamma(T)-\epsilon)\|v\|\\
                      &>0.
\end{align*}
Thus $(T+S)(x)\neq 0$. Thus $x\notin N(T+S)$. This show that $N(T)=N(T+S)$. Since $D(T+S)=D(T)$, we have $C(T+S)=C(T)$ and hence
\begin{align*}
\gamma(T+S)&=\inf{\{\|(T+S)(x)\|:x\in C(T),\; \|x\|=1}\}\\
                          &=\inf{\{\|(T_C+S_0)(x)\|:x\in C(T),\; \|x\|=1}\}\\
                          &=\|(T_C+S_0)(x_0)\|\\
                          &=\|(T+S)(x_0)\|.
\end{align*}

Next suppose that $\gamma(T)=0$. Let $\epsilon>0$. Choose $x_0\in C(T)$ such that $\|x_0\|=1$ and $\|Tx_0\|<\dfrac{\epsilon}{4}$. Then
\begin{equation*}
\|(T+\dfrac{\epsilon}{2}I)(x_0)\|\geq \dfrac{\epsilon}{2}-\|Tx_0\|\geq \dfrac{\epsilon}{4}.
\end{equation*}
Hence
\begin{equation*}
0<\dfrac{\epsilon}{4}\leq m(T+\dfrac{\epsilon}{2}I)\leq \gamma(T+\dfrac{\epsilon}{2}).
\end{equation*}

By above argument, there exists $\tilde{S}\in \mathcal B(H_1,H_2)$ such that $\|\tilde{S}\|\leq \dfrac{\epsilon}{2}$ and $T+\dfrac{\epsilon}{2}I+\tilde{S}$ attains reduced minimum. Then $\|\dfrac{\epsilon}{2}I+\tilde{S}\|\leq \epsilon$. Take $S=\dfrac{\epsilon}{2}+\tilde{S}$. Then $S$ satisfies all the stated conditions.
\end{proof}

 We have the following consequences.
\begin{theorem}
The following statements holds true;
\begin{enumerate}
\item \label{rmmdensegapmetric}  $\Gamma_c(H_1,H_2)$  is dense in $\mathcal C(H_1,H_2)$ with respect to the  gap metric $\theta(\cdot,\cdot)$.
\item \label{rmmdensecgt}  $\Gamma_c(H_1,H_2)$  is dense in $\mathcal C(H_1,H_2)$ with respect to the metric $\eta(\cdot,\cdot)$
\item \label{clrangedensegap} the set of all closed range operators of $\mathcal C(H_1,H_2)$ is dense in $\mathcal C(H_1,H_2)$ with respect to the metric $\theta(\cdot,\cdot)$
\item \label{clrangedensecgt} the set of all closed range operators of $\mathcal C(H_1,H_2)$ is dense in $\mathcal C(H_1,H_2)$ with respect to the metric $\eta(\cdot,\cdot)$.
\end{enumerate}
\end{theorem}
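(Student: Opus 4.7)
The plan is to reduce all four density statements to the approximation provided by Theorem \ref{rmmdensegeneral}, which for every $T\in \mathcal{C}(H_1,H_2)$ and every $\epsilon>0$ produces a bounded operator $S$ with $\|S\|\le \epsilon$, $N(T+S)=N(T)$, and $T+S\in \Gamma_c(H_1,H_2)$. With such an $S$ in hand, the rest of the argument consists of estimating the two distances $\theta(T,T+S)$ and $\eta(T,T+S)$.

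For statement (\ref{rmmdensegapmetric}), I would fix $T\in \mathcal{C}(H_1,H_2)$ and $\epsilon>0$, choose $S$ as above, and observe that since $S\in \mathcal{B}(H_1,H_2)$ is everywhere defined we have $D(T+S)=D(T)$ and the operator $(T+S)-T=S$ is bounded. Hence part (2) of Theorem \ref{specialgapformula} applies and gives
\begin{equation*}
\theta(T,T+S)\le \|S\|\le \epsilon,
\end{equation*}
with $T+S\in \Gamma_c(H_1,H_2)$. This proves the gap-metric density.

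For statement (\ref{rmmdensecgt}), I would use the same $S$ and exploit the crucial extra conclusion of Theorem \ref{rmmdensegeneral}, namely $N(T+S)=N(T)$. This forces $\theta(N(T),N(T+S))=0$, so Theorem \ref{equivalenceofcgtandgap} collapses to
\begin{equation*}
\eta(T,T+S)=\theta(T,T+S)\le \epsilon,
\end{equation*}
giving density in the carrier graph topology. This is exactly the point at which preserving the null space in Theorem \ref{rmmdensegeneral} pays off; without the identity $N(T)=N(T+S)$ one would only have the weaker bound $\eta(T,T+S)\le \theta(T,T+S)+\theta(N(T),N(T+S))$, which need not be small.

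Finally, statements (\ref{clrangedensegap}) and (\ref{clrangedensecgt}) follow immediately because, by Proposition \ref{closedrange}, every operator in $\Gamma_c(H_1,H_2)$ has closed range, so $\Gamma_c(H_1,H_2)$ is contained in the set of closed range operators in $\mathcal{C}(H_1,H_2)$; density of the smaller class with respect to each metric, already proved in (\ref{rmmdensegapmetric}) and (\ref{rmmdensecgt}), forces density of the larger class. There is no real obstacle here; the only delicate point is the one highlighted above, namely recognizing that the null-space-preserving perturbation in Theorem \ref{rmmdensegeneral} is what makes the carrier graph estimate go through.
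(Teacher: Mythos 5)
Your proposal is correct and follows essentially the same route as the paper: perturb via Theorem \ref{rmmdensegeneral}, use the preserved null space together with Theorem \ref{equivalenceofcgtandgap} to get $\eta=\theta$, and deduce the closed-range statements from Proposition \ref{closedrange}. The only difference is that you explicitly justify the bound $\theta(T,T+S)\le\|S\|\le\epsilon$ via part (2) of Theorem \ref{specialgapformula}, a step the paper leaves implicit; this is a welcome clarification, not a deviation.
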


\begin{proof}
Proof of (\ref{rmmdensegapmetric}): Follows by Theorem \ref{rmmdensegeneral}.

Proof of (\ref{rmmdensecgt}): Let $\epsilon>0$. Then by Theorem \ref{rmmdensegeneral}, we can obtain $S\in \mathcal B(H_1,H_2)$  with $\|S\|\leq \epsilon$ such that $N(T)=N(T+S)$ and $\theta(T,T+S)\leq \epsilon$. By Theorem \ref{equivalenceofcgtandgap}, it follows that $\eta(T+S,T)=\theta(T+S,T)\leq \epsilon$. Hence the claim.

Proof of  (\ref{clrangedensegap}): Since $\Gamma_c(H_1,H_2)$ is a subset of the set of all closed range operators in $\mathcal C(H_1,H_2)$,  the conclusion is immediate by (\ref{rmmdensecgt}) above.

Proof of (\ref{clrangedensecgt}): This follows by  Proposition \ref{closedrange} and (\ref{rmmdensecgt}) above.
\end{proof}

Using the equivalence of the gap metric and the metric induced by the operator norm on $\mathcal B(H_1,H_2)$ we can obtain the following consequences.
\begin{cor}
The following statements are true.
\begin{enumerate}
\item\label{rmmbdddensecgt} $\Gamma(H_1,H_2)$ is dense in $\mathcal B(H_1,H_2)$ with respect to the operator norm
\item \label{clrangedensenorm} the set of all bounded closed range operators is dense in $\mathcal B(H_1,H_2)$ with respect to the operator norm.
\end{enumerate}
\end{cor}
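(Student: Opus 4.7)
My plan is to deduce both parts directly from Theorem \ref{rmmdensegeneral}, which already delivers its approximating perturbation in bounded form. This sidesteps any issue about whether a gap-metric approximant of a bounded operator must itself be bounded, although Theorem \ref{equalityoftopsnakamoto} provides the conceptual reason why density in one metric transfers to the other on $\mathcal B(H_1,H_2)$.

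For part (\ref{rmmbdddensecgt}), I fix $T \in \mathcal B(H_1,H_2)$ and $\epsilon > 0$. Viewing $T$ as an element of $\mathcal C(H_1,H_2)$, Theorem \ref{rmmdensegeneral} furnishes $S \in \mathcal B(H_1,H_2)$ with $\|S\| \leq \epsilon$ such that $T+S \in \Gamma_c(H_1,H_2)$. Because $T$ and $S$ are both bounded, so is $T+S$, and hence $T+S \in \Gamma(H_1,H_2)$. The estimate $\|T-(T+S)\| = \|S\| \leq \epsilon$ then shows $\Gamma(H_1,H_2)$ is dense in $\mathcal B(H_1,H_2)$ in the operator norm.

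For part (\ref{clrangedensenorm}), I invoke Proposition \ref{closedrange}: every operator in $\Gamma_c(H_1,H_2)$, and a fortiori every operator in $\Gamma(H_1,H_2)$, has closed range. Thus $\Gamma(H_1,H_2)$ is contained in the set of bounded closed range operators, and the norm density established in part (\ref{rmmbdddensecgt}) at once upgrades to density of the larger class.

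There is essentially no obstacle to overcome: the substantive work is entirely in Theorem \ref{rmmdensegeneral}, and this corollary is a routine specialization to the bounded setting. The only point worth flagging is that one should not try to route the argument through gap-metric density followed by Theorem \ref{equalityoftopsnakamoto} naively, since that equivalence requires both operators to lie in $\mathcal B(H_1,H_2)$ to begin with; reading off boundedness of $T+S$ directly from $T, S \in \mathcal B(H_1,H_2)$ is the cleanest path.
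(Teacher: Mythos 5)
Your proof is correct. The paper itself gives no written proof of this corollary; it only prefaces it with the remark that the statements follow ``using the equivalence of the gap metric and the metric induced by the operator norm on $\mathcal B(H_1,H_2)$'', i.e.\ it intends to combine the gap-metric density of $\Gamma_c(H_1,H_2)$ in $\mathcal C(H_1,H_2)$ with Theorem \ref{equalityoftopsnakamoto}. You instead go straight back to Theorem \ref{rmmdensegeneral} and read off that the perturbation $S$ is bounded with $\|S\|\leq\epsilon$, so that for bounded $T$ the approximant $T+S$ is automatically in $\Gamma(H_1,H_2)$ and $\|T-(T+S)\|\leq\epsilon$. This is a genuinely different (and slightly more careful) route: the paper's one-line justification quietly relies on the fact that the gap-metric approximant of a bounded operator can be taken bounded, which is not a formal consequence of the topology equivalence alone (that equivalence only compares the two metrics on pairs of operators already in $\mathcal B(H_1,H_2)$) but does follow from the explicit form of the perturbation in Theorem \ref{rmmdensegeneral} --- exactly the point you flag. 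Your handling of part (\ref{clrangedensenorm}) via Proposition \ref{closedrange} and containment matches the pattern the paper uses for the analogous unbounded statements. Both approaches are valid; yours makes explicit a step the paper leaves implicit, at the cost of not illustrating the use of Theorem \ref{equalityoftopsnakamoto}.
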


\section{A Corrected  Formula }
In \cite{shkgrcgt} an incorrect formula was given for the gap $\theta(T,nI)$ between a symmetric, closed  densely defined operator $T$ and $nI$. In  this section we point out the error and give a correct formula with proof.

  Let $T\in \mathcal C(H_1,H_2)$. Recall that $\Check {T}:=(I+T^*T)^{-1}$ and $\widehat T:=(I+TT^*)^{-1}$. First we recall the incorrect formula given in \cite{shkgrcgt}.

Let $T\in \mathcal C(H)$ be symmetric. Let $n\in \mathbb N$ be fixed. Then
\begin{equation*}
\theta (T,nI)=\dfrac{1}{\sqrt{1+n^2}}.
\end{equation*}

The correct formula is given in the following Proposition.
\begin{prop}
Let $T\in \mathcal C(H)$ be symmetric. Let $n\in \mathbb N$ be fixed. Then
\begin{equation*}
\theta(T,nI)=\dfrac{1}{\sqrt{1+n^2}}\max \left\{\|(T-nI)\Check{T}^\frac{1}{2}\|,\; \|(T^*-nI)\widehat{T}^\frac{1}{2}\|\right\}.
\end{equation*}
In particular, if $T=T^*$, then
\begin{equation*}
\theta(T,nI)=\dfrac{\|(T-nI)(I+T^2)^\frac{-1}{2}\|}{\sqrt{1+n^2}}.
\end{equation*}
Further more, if $-\dfrac{1}{n}\in \sigma(T)$, then $\theta(T,nI)=1$.
\end{prop}
\begin{proof}
We use the formula in Theorem \ref{shkgrgapformula}. Let $S=nI$. Then $\Check{S}^{\frac{1}{2}}=\dfrac{I}{\sqrt{1+n^2}}=\Hat S^\frac{1}{2}$ and $S\Check{S}^\frac{1}{2}=\dfrac{nI}{\sqrt{1+n^2}}$.
Now
\begin{equation*}
S\Check{S}^\frac{1}{2}\Check{T}^\frac{1}{2}-\widehat{S}^\frac{1}{2}T\Check{T}^\frac{1}{2}=\dfrac{1}{\sqrt{1+n^2}}(n\Check{T}^\frac{1}{2}-T\Check{T}^\frac{1}{2})=\dfrac{1}{\sqrt{1+n^2}}(nI-T)\Check{T}^\frac{1}{2}.
\end{equation*}
And
\begin{equation*}
T\Check{T}^\frac{1}{2}\Check{S}^\frac{1}{2}-\widehat{T}^\frac{1}{2}S\Check{S}^\frac{1}{2}=\dfrac{1}{\sqrt{1+n^2}}(T\Check{T}^\frac{1}{2}-n\widehat{T}^\frac{1}{2}).
\end{equation*}
Let $A:=n\Check{T}^\frac{1}{2}-T\Check{T}^\frac{1}{2}$ and $B:=T\Check{T}^\frac{1}{2}-n\widehat{T}^\frac{1}{2}$. Then $\theta(T,nI)=\dfrac{1}{\sqrt{1+n^2}}\max{\{\|A\|,\|B\|}\}$.
Note that $B^*=T^*\widehat{T}^\frac{1}{2}-n\widehat{T}^\frac{1}{2}=(T^*-nI)\widehat{T}^\frac{1}{2}$. Since $\|B^*\|=\|B\|$, we get that
\begin{equation*}
\theta (T,nI)=\dfrac{1}{\sqrt{1+n^2}}\max \left\{\|(T-nI)\Check{T}^\frac{1}{2}\|,\; \|(T^*-nI)\widehat{T}^\frac{1}{2}\|\right\}.
\end{equation*}

If $T=T^*$, then $A=B$ and hence the formula follows in this case. As $A^*=A$ and $A$ is bounded, we have
\begin{equation*}
\|A\|=\sup{\{|\lambda|:\lambda \in \sigma(A)}\}=\sup \left\{\dfrac{|n-\lambda|}{\sqrt{1+\lambda^2}}:\lambda \in \sigma(T)\right\}.
\end{equation*}

 Hence consider the function
\begin{equation*}
f(x)=\dfrac{|x-n|}{\sqrt{1+x^2}},\;  x\in \sigma(T)\subseteq \mathbb R.
\end{equation*}
If  $x_0=\dfrac{-1}{n}\in \sigma(T)$, then we have $f(x_0)=\sqrt{1+n^2}$ and hence $\|A\|\geq \sqrt{1+n^2}$. Hence $\theta (T,nI)=1$.
\end{proof}
The following example illustrates the formula.
\begin{eg}
Let $H=\ell^2$ and $\mathcal D={\{(x_m)\in H: (mx_m)\in H}\}$. Define $T: \mathcal D\rightarrow H$ by
\begin{equation*}
T(x_1,x_2,x_3,\dots)=(x_1,2x_2,3x_3,\dots )\;\; \text{for all}\;\; (x_m)\in \mathcal D.
\end{equation*}
Clearly $T$ is densely defined, $T=T^*$ and range of $T$ is closed. Let ${\{e_m:m\in \mathbb N}\}$ be the standard orthonormal basis of $H$. Then $Te_m=me_m$ for each $m\in \mathbb N$. Hence $\mathbb N\subseteq \sigma_p(T)$, the point spectrum of $T$. In fact, we can show that $\sigma(T)=\mathbb N$.

 \end{eg}

 For each $m\in \mathbb N$, we have
\begin{align*}		
T^2e_m&=m^2e_m\\
(I+T^2)(e_m)&=(1+m^2)e_m\\
(I+T^2)^\frac{1}{2}e_m&=\sqrt{1+m^2}e_m\\
\Check{T}^\frac{1}{2}e_m=(I+T^2)^\frac{-1}{2}e_m&=\dfrac{1}{\sqrt{1+m^2}}e_m\\
T\Check{T}^\frac{1}{2}e_m&=\dfrac{m}{\sqrt{1+m^2}}e_m.
\end{align*}
Now $(T-nI)\Check{T}^\frac{1}{2}\;e_m=\dfrac{m-n}{\sqrt{1+m^2}}\;e_m$ for each $m\in \mathbb N$. Hence
\begin{equation}\label{generalformula}
\|(T-nI)\Check{T}^\frac{1}{2}\|=\sup \left \{\dfrac{|m-n|}{\sqrt{1+m^2}}:m\in \mathbb N\right\}=\max \left\{1, \dfrac{|n-1|}{\sqrt{2}}\right\}.
 \end{equation}	
 Hence $\theta(T,nI)=\dfrac {1}{\sqrt{1+n^2}}\,\max \left\{1,\dfrac{|n-1|}{\sqrt{2}}\right\}$.
\begin{note}
For a fixed $n\in \mathbb N$, the sequence $a_m:=\left\{\dfrac{|m-n|}{\sqrt{1+m^2}}\right\}$ decreases for $m=1$ to $n$ ($a_n=0$) and then increases with $\displaystyle \lim_{m\rightarrow \infty}a_m=1$.
\end{note}


\end{document}